\documentclass[12pt]{amsart}
\usepackage{amsfonts}
\usepackage{amssymb}
\input amssym.def
\usepackage{pstricks}
\usepackage[dvips]{hyperref}
\newtheorem{theorem}{Theorem}
\newtheorem{lemma}[theorem]{Lemma}
\newtheorem{prop}[theorem]{Proposition}
\newtheorem{cor}[theorem]{Corollary}
\theoremstyle{definition}
\newtheorem{rem}[theorem]{Remark}

\newcommand{\M}{\mathcal{M}}

\newcommand{\Z}{\mathbb{Z}}

\newcommand{\bdr}[1]{\partial\! #1}
\newcommand{\lr}[1]{\left<#1\right>}

\numberwithin{equation}{section}
\numberwithin{theorem}{section}
\author{B\l{}a\.zej Szepietowski}
\title[Generating level 2 mapping class group]{A finite generating set for the level 2 mapping class group of a nonorientable surface}
\address[]{Institute of Mathematics, Gda\'nsk University, Wita Stwosza 57,
80-952 Gda\'nsk, Poland} 

\email{blaszep@mat.ug.edu.pl}
\thanks{Supported by the MNiSW grant N N201 366436}

\begin{document}
\begin{abstract}
We obtain a finite set of generators for the level 2 mapping class group of a closed nonorientable surface of genus $g\ge 3$. This set consists of isotopy classes of Lickorish's Y-homeomorphisms also called crosscap slides.
\end{abstract}

\maketitle

\section{Introduction}
For a compact surface $F$ the {\it mapping class group} $\M(F)$ is the group of isotopy classes of all, orientation
preserving if $F$ is orientable, homeomorphisms $h\colon F\to F$. 
The {\it level two mapping class group} is the subgroup $\Gamma_2(F)$ of $\M(F)$ consisting of the isotopy classes of homeomorphisms inducing the identity on $H_1(F,\Z_2)$. Note that it is a normal subgroup of $\M(F)$ of finite index.
For a closed orientable surface $F$ of genus $g>1$ Humphries proved in \cite{Hum2} that $\Gamma_2(F)$ is equal to the normal closure in $\M(F)$ of the square of a Dehn twist about a nonseparating simple closed curve. Sato computed in \cite{Sato} the abelianization of $\Gamma_2(F)$ for orientable $F$ with at most one boundary component and genus $g\ge 3$.

In this paper we are interested in the case of a closed nonorientable surface, which will be denoted as $N$ or $N_g$, where $g$ is the genus.  The genus of a nonorientable surface is the number of projective planes in the connected sum decomposition. Lickorish defined in \cite{Lick1} a homeomorphism of $N$ that he called Y-homeomorphism and proved in \cite{Lick1,Lick2} that  $\M(N_g)$ is generated by Dehn twists and one isotopy class of Y-homeomorphisms for $g\ge 2$. Y-homeomorphisms were called crosscap slides in \cite{Kork} and also in this paper we use this name. Chillingworth found in \cite{Chill} a finite set of generators for $\M(N_g)$. 
The actions of $\M(N_g)$ on $H_1(N_g,\Z)$ and $H_1(N_g,\Z_2)$ we studied by McCarthy and Pinkall \cite{McCP} and by Gadgil and Pancholi \cite{GP}. They proved that all automorphisms of $H_1(N_g,\Z)$ preserving the $\Z_2$-valued intersection form are induced by homeomorphisms. Recently S. Horose \cite{Hir} proved that a homeomorphsim of a closed non-orientable surface $N$ standardly  embedded in the 4-sphere is extendable to a homeomorphism of the 4-sphere if and only if it preserves the Guillou-Marin quadratic form  on $H_1(N,\Z_2)$.
In \cite{Szep2} we proved that for $g\ge 2$ the group $\Gamma_2(N_g)$
is equal to the normal closure in $\M(N_g)$ of one crosscap slide. In this paper 
we obtain a finite collection of crosscap slides generating $\Gamma_2(N_g)$ for $g\ge 3$. We also prove that $\Gamma_2(N_3)$ is isomorphic to the level 2 congruence subgroup of $GL(2,\Z)$ and compute the abelianization of $\Gamma_2(N_4)$.

\medskip

\noindent{\bf Acknowledgment.} I am grateful to Professor Susumu Hirose for sending me his preprint \cite{Hir}.

\section{Preliminaries}
By a {\it simple closed curve} in $N$ we mean an embedding
$\gamma\colon S^1\to N$. Note that $\gamma$ has an orientation; the
curve with the opposite orientation but same image will be denoted by
$\gamma^{-1}$. 
By abuse of notation, we will usually identify a simple closed curve with its
oriented image and also with its isotopy class. 
According to whether a regular neighborhood of $\gamma$ is an annulus or a M\"obius strip, we call $\gamma$ respectively {\it two-} or {\it one-sided}. 

Given a two-sided simple closed curve $\gamma$, $T_\gamma$ denotes a Dehn
twist about $\gamma$. On a  nonorientable surface it is
impossible to distinguish between right and left twists, so the
direction of a twist $T_\gamma$ has to be specified for each curve
$\gamma$. In this paper it is usually indicated by arrows in a figure.
 Equivalently we may choose an orientation of a regular
neighborhood of $\gamma$. Then $T_\gamma$ denotes the right Dehn twist with
respect to the chosen orientation. Unless we specify which of the
two twists we mean, $T_\gamma$ denotes any of the
two possible twists. 
By abuse of notation we will use the same symbol to denote a
homeomorphism and its isotopy class.

\begin{figure}
\input{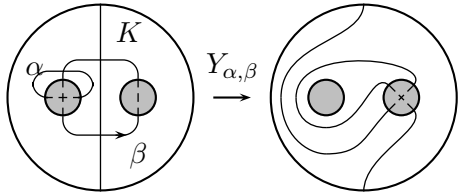}
\caption{\label{Y} Crosscap slide.}
\end{figure}

Suppose that $\alpha$ and $\beta$ are two simple closed curves in $N$,
such that $\alpha$ is one-sided, $\beta$ is two-sided and they
intersect in one point. Let $K\subset N$ be a regular neighborhood of
$\alpha\cup\beta$, 
which is homeomorphic to the Klein bottle with a
hole, and let $M\subset K$ be a regular neighborhood of $\alpha$, which
is a M\"obius strip. We denote by $Y_{\alpha,\beta}$ the {\it crosscap slide}, or {\it
Y-homeomorphism} equal to the identity on $N\backslash K$ and which may be
described as the result of pushing $M$ once along $\beta$ keeping the
boundary of $K$ fixed. Figure \ref{Y} illustrates the effect of
$Y_{\alpha,\beta}$ on an arc connecting two points in the boundary of
$K$. Here, and also in other figures of this paper, the shaded discs
represent crosscaps; this means that their interiors should be
removed, and then antipodal points in each resulting boundary
component should be identified.   
Up to isotopy, $Y_{\alpha,\beta}$ does not depend on the
choice of the regular neighbourhood $K$. 
The following
properties of crosscap slides are easy to verify.
\begin{equation}\label{y^-11}
Y_{\alpha^{-1},\beta}=Y_{\alpha,\beta},
\end{equation}
\begin{equation}\label{y^-1}
Y_{\alpha,\beta^{-1}}=Y^{-1}_{\alpha,\beta},
\end{equation}
\begin{equation}\label{hyh}
hY_{\alpha,\beta}h^{-1}=Y_{h(\alpha),h(\beta)},
\end{equation}
for all $h\in\M(N)$. 

Every crosscap slide induces an identity on $H_1(N,\Z_2)$, hence it belongs to $\Gamma_2(N)$.

\begin{theorem}[\cite{Szep2}]
For $g\ge 2$ the level 2 mapping class group $\Gamma_2(N_g)$ is generated by crosscap slides.
\end{theorem}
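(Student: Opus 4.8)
The plan is to reduce the theorem to a finite list of computations inside subsurfaces of small genus. Let $G\le\M(N_g)$ be the subgroup generated by all crosscap slides. By the conjugation formula~\eqref{hyh} the set of crosscap slides is invariant under conjugation by $\M(N_g)$, so $G$ is \emph{normal} in $\M(N_g)$, and since every crosscap slide acts trivially on $H_1(N_g,\Z_2)$ we have $G\subseteq\Gamma_2(N_g)$. Because $\M(N_g)/\Gamma_2(N_g)$ is the finite orthogonal group $Q=O(g,\Z_2)$ of the mod-$2$ intersection form (McCarthy--Pinkall~\cite{McCP}, Gadgil--Pancholi~\cite{GP}), the theorem is equivalent to saying that the natural surjection $\M(N_g)/G\twoheadrightarrow Q$ is an isomorphism; since $G$ is normal, this amounts to exhibiting a set $X\subseteq G$ that normally generates $\Gamma_2(N_g)$ in $\M(N_g)$.

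To produce such an $X$ I would start from a finite generating set of $\M(N_g)$ consisting of Dehn twists $T_{\gamma_1},\dots,T_{\gamma_k}$ about two-sided curves and one crosscap slide $Y$ (Lickorish~\cite{Lick1,Lick2}, Chillingworth~\cite{Chill}). Under the action on $H_1(N_g,\Z_2)$ each $T_{\gamma_i}$ maps to the transvection $t_{[\gamma_i]}\colon x\mapsto x+\lr{x,[\gamma_i]}[\gamma_i]$, of order at most two, while $Y$ maps to the identity. Choosing a finite presentation of $Q$ on the generators $t_{[\gamma_i]}$ (with relations $t^2=1$, braid-type relations controlled by the intersection numbers $[\gamma_i]\cdot[\gamma_j]\in\Z_2$, and transvection-conjugation relations), the standard fact that the kernel of a surjection is the normal closure of the lifted relators shows that $\Gamma_2(N_g)$ is normally generated in $\M(N_g)$ by $Y$, the squares $T_{\gamma_i}^2$, and, for each defining relator $r$ of $Q$, one word $\widetilde r$ in the $T_{\gamma_i}^{\pm1}$ lifting $r$. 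Each $\widetilde r$ is a product of Dehn twists that dies in $Q$ and so is supported in a subsurface of bounded complexity; the essential cases are $T_aT_b^{\pm1}$ with $a,b$ disjoint and $[a]=[b]$ in $H_1(N_g,\Z_2)$, commutators $[T_a,T_b]$ of twists about curves of even geometric intersection number, and twists about separating curves. It thus remains to show that every $T_{\gamma_i}^2$ and every $\widetilde r$ lies in $G$.

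For the twist-squares the key observation is that every two-sided nonseparating simple closed curve $\gamma$ in $N_g$ ($g\ge2$) is met transversely in a single point by some one-sided curve $\alpha$ (which holds for every $g\ge2$ and is easy to see from a dual arc to $\gamma$). A regular neighbourhood $K$ of $\alpha\cup\beta$, where $\beta$ is a two-sided curve meeting $\alpha$ once inside $K$, is a Klein bottle with a hole, and the relation $Y_{\alpha,\beta}^2=T_{\partial K}$ puts $T_{\partial K}$ into $G$; combining this with the crosscap slides supported in $K$ attached to the various one-sided curves in $K$, a direct computation in $\M(N_{2,1})$ shows $T_\gamma^2\in G$, and the separating case then follows via nonorientable chain and lantern-type relations. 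The remaining relators $\widetilde r$ are each supported in a subsurface of genus at most four and can be rewritten as products of crosscap slides there; to handle all of these uniformly one would induct on $g$, with base cases $g=2$ (where $\M(N_2)\cong\Z_2\oplus\Z_2$ and $\Gamma_2(N_2)=\lr{Y}$) and $g=3$ (where $\M(N_3)\cong GL(2,\Z)$ and the claim reduces to a matrix computation), the inductive step reducing a general element of $\Gamma_2(N_g)$, modulo $G$, to one fixing a one-sided curve and hence restricting to $\Gamma_2$ of a surface of genus $g-1$.

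The main obstacle is exactly this bundle of geometric facts: showing that the twist-squares, the commutators of twists about evenly intersecting curves, the bounding-pair-type words $T_aT_b^{\pm1}$, and the separating twists can each be expressed as products of crosscap slides in a controlled subsurface, and making precise the reduction, modulo $G$, of an arbitrary element of $\Gamma_2(N_g)$ to one supported away from a crosscap. The other ingredients --- normality of $G$, the identification $\M(N_g)/\Gamma_2(N_g)\cong O(g,\Z_2)$, and the Reidemeister--Schreier reduction to lifted relators --- are either standard or already available in the literature.
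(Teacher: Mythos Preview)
The paper does not itself prove this theorem: it is quoted from \cite{Szep2}, and no argument is given here. From the introduction and from the proof of Theorem~\ref{main} (which invokes \cite[Lemma~3.6]{Szep2}) one sees that the result in \cite{Szep2} is in fact stronger: $\Gamma_2(N_g)$ is the \emph{normal closure} in $\M(N_g)$ of a single crosscap slide. Since all crosscap slides $Y_{\alpha,\beta}$ with $\beta$ two-sided are conjugate in $\M(N_g)$, this immediately yields the statement.

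Your plan is a different and considerably heavier route: rather than appealing to a normal-closure statement, you propose to pick a finite presentation of $Q=\M(N_g)/\Gamma_2(N_g)$ on transvections, lift every relator to $\M(N_g)$, and then express each lifted relator as a product of crosscap slides. This is a coherent strategy, and the normality of $G$ via \eqref{hyh} together with the Reidemeister--Schreier step are correct. But what you hand in is a programme, not a proof: you explicitly flag as ``the main obstacle'' precisely the substantive content --- writing twist-squares, bounding-pair words $T_aT_b^{\pm1}$, commutators of evenly intersecting twists, and separating twists as products of crosscap slides, and carrying out the reduction modulo $G$ to a surface of smaller genus. None of these steps is done. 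For one of them the present paper already contains the needed identity: Lemma~\ref{t^21} gives $T_\beta^2=Y_{T_\beta(\alpha),\beta}Y_{\alpha,\beta}^{-1}$ directly, so the detour through $Y_{\alpha,\beta}^2=T_{\partial K}$ and ``a direct computation in $\M(N_{2,1})$'' is unnecessary. The other items, however, require real work, and the presentation of $Q$ you would need (with a controlled relator set whose lifts live in bounded subsurfaces) is not something you can simply quote.

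In short: the statement is imported from \cite{Szep2}, whose approach is to prove a normal-closure result for a single crosscap slide; your alternative via lifted relators of the finite quotient is reasonable in outline but, as written, leaves exactly the hard geometric verifications undone.
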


Fix $x_0\in N_g$ and define $\M(N_g,x_0)$ to be the group of isotopy classes all homeomorphisms of $h\colon N_g\to N_g$ such that $h(x_0)=x_0$. 
Let $U=\{z\in\mathbb{C}\,|\,|z|\le 1\}$ and fix an embedding
$e\colon U\to N_g$ such that $e(0)=x_0$.
The surface $N_{g+1}$ may be obtained by removing from $N_g$ the interior of $e(U)$ and then identifying $e(z)$ with $e(-z)$ for $z\in S^1=\bdr{U}$.
We define a {\it blowup homomorphism} 
\[\varphi\colon\M(N_g,x_0)\to\M(N_{g+1})\] as follows. 
Represent $h\in\M(N_g,x_0)$ by a homeomorphism $h\colon N_g\to N_g$ such that $h$ is equal to the identity on $e(U)$ or $h(x)=e(\overline{e^{-1}(x)})$ for $x\in e(U)$. Such $h$ commutes with the identification leading to $N_{g+1}$ and thus induces an element $\varphi(h)\in\M(N_{g+1})$. We refer the reader to \cite{Szep2} for a proof that $\varphi$ is well defined.

Forgetting the distinguished point $x_0$ induces a homomorphism
\[\M(N_g,x_0)\to\M(N_{g}),\] which fits into the Birman exact sequence (see \cite{Kork})
\[\pi_1(N_{g},x_0)\stackrel{j}{\to}\M(N_{g},x_0)\to\M(N_{g})\to 1.\]
The homomorphism $j$ is called {\it point pushing map}.
If $\gamma$ is a loop in $N_{g}$ based at $x_0$ and $[\gamma]\in\pi_1(N_{g},x_0)$ is  its homotopy class, then $j([\gamma])$ may be described as the effect of pushing $x_0$ once along $\gamma$. In order for $j$ to be a homomorphism, a product $[\gamma]\cdot[\delta]$ in $\pi_1(N_{g},x_0)$ means go along $\delta$ first and then along $\gamma$.

We define a {\it crosscap pushing map} to be the composition 
\[\psi=\varphi\circ j\colon\pi_1(N_{g},x_0)\to\M(N_{g+1}).\] 
Let $\alpha$ be the image in $N_{g+1}$ of $e(\bdr{U})$. Then $\alpha$ is a one-sided simple closed curve. Every simple loop $\gamma$ based at $x_0$ is homotopic to a loop $\gamma'$ which intersects $e(U)$ in two antipodal points. If $\beta$ is the image in $N_{g+1}$ of $\gamma'\backslash int(e(U))$ than $\beta$ is a simple closed curve, which intersects $\alpha$ in one-point, and which is two-sided if and only if $\gamma$ is one-sided.
The following lemma follows from the description of the point pushing map for nonorientable surfaces \cite[Lemma 2.2 and Lemma 2.3]{Kork} and the definition of a crosscap slide.

\begin{lemma}\label{push}
Suppose that $\gamma$ is a simple  loop in $N_{g}$ based at $x_0$
intersecting $e(\bdr U)$ in two antipodal points. Let $\alpha$ and $\beta$ be the images in $N_{g+1}$ of $e(\bdr{U})$ and $\gamma\backslash int(e(U))$ respectively.
If $\gamma$ is one-sided, then 
\[\psi([\gamma])=Y_{\alpha,\beta}.\]
If $\gamma$ is two-sided, then
\[\psi([\gamma])=(T_{\delta_1} T_{\delta_2}^{-1})^\varepsilon,\]
where $\delta_1$ and $\delta_2$ are boundary curves of a regular neighbourhood $M$ of $\alpha\cup\beta$, the twists are right with respect to some orientation of $M\backslash\alpha$ and $\varepsilon$ is $1$ or $-1$ depending on the orientation of $\beta$.
\end{lemma}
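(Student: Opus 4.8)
The plan is to identify the effect of the crosscap pushing map $\psi=\varphi\circ j$ by tracing through the two compositions carefully. Since $\psi([\gamma])=\varphi(j([\gamma]))$, and $j([\gamma])$ is the point push of $x_0$ along $\gamma$, I would first recall the explicit description of the point pushing map on a nonorientable surface from Korkmaz's Lemma 2.2 and Lemma 2.3: pushing a point along a loop $\gamma$ is supported in a regular neighborhood of $\gamma$, and that neighborhood is an annulus or a M\"obius band depending on whether $\gamma$ is two-sided or one-sided. In the two-sided case the point push equals $T_{\gamma_1}T_{\gamma_2}^{-1}$ where $\gamma_1,\gamma_2$ are the two boundary components of an annular neighborhood of $\gamma$; in the one-sided case the neighborhood is a M\"obius band with a single boundary curve, and the point push is a \emph{boundary slide}-type map whose support is that M\"obius band.

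Next I would apply $\varphi$, the blowup homomorphism. The key point is that $\varphi$ is defined by representing the mapping class by a homeomorphism that is the identity (or the antipodal map) on $e(U)$, so it does not move the crosscap; it merely transplants the supporting homeomorphism from $N_g$ into $N_{g+1}$. Concretely, under blowup the distinguished point $x_0$ is replaced by the one-sided curve $\alpha=$ image of $e(\bdr U)$, and the loop $\gamma$ (meeting $e(U)$ in two antipodal points, as we may assume after an isotopy) becomes, after deleting $\mathrm{int}(e(U))$, the curve $\beta$. A regular neighborhood of $\gamma$ in $N_g$ together with $e(U)$ blows up to a regular neighborhood $K$ of $\alpha\cup\beta$ in $N_{g+1}$, which is a Klein bottle with a hole in the one-sided case and an annular-type piece in the two-sided case. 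So I would check case by case: (i) if $\gamma$ is one-sided, $K$ is the Klein bottle with a hole that is exactly the support of $Y_{\alpha,\beta}$, and the blowup of the M\"obius-band point-push is, by unwinding the definitions, precisely the crosscap slide $Y_{\alpha,\beta}$ — this is the content to verify against the definition of $Y_{\alpha,\beta}$ given earlier in the paper; (ii) if $\gamma$ is two-sided, the annular neighborhood of $\gamma$ blows up so that its two boundary curves become the two boundary curves $\delta_1,\delta_2$ of a regular neighborhood $M$ of $\alpha\cup\beta$, and the point-push $T_{\gamma_1}T_{\gamma_2}^{-1}$ transports to $(T_{\delta_1}T_{\delta_2}^{-1})^{\pm 1}$, the sign recording how the orientation of $\beta$ matches the chosen orientation of $M\setminus\alpha$.

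The main obstacle I expect is bookkeeping rather than conceptual: making sure the supports match up correctly under blowup, in particular that the M\"obius strip $M$ which is the regular neighborhood of $\alpha$ inside $K$ corresponds to the annular neighborhood of $x_0$ that is swept along during the point push, and that ``push $M$ once along $\beta$'' in the definition of $Y_{\alpha,\beta}$ coincides with ``push $x_0$ once along $\gamma$'' after blowup. This amounts to checking that the blowup construction intertwines the point-pushing diffeomorphism with the crosscap-sliding diffeomorphism, which is essentially how $Y_{\alpha,\beta}$ is set up, but one must be careful with the identification $e(z)\sim e(-z)$ and with which direction things are pushed. Tracking the orientations needed to pin down $\varepsilon$ in the two-sided case, and the direction of the twists relative to an orientation of $M\setminus\alpha$, is the other place where care is required; since $N_{g+1}$ is nonorientable, only the relative orientation of the annulus $M\setminus\alpha$ is meaningful, and the statement is phrased accordingly. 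Once these identifications are in place the lemma follows directly by combining the quoted description of $j$ with the definition of $\varphi$.
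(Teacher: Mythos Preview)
Your proposal is correct and follows essentially the same approach as the paper: the paper does not give a detailed proof but simply states that the lemma follows from the description of the point pushing map for nonorientable surfaces in \cite[Lemma 2.2 and Lemma 2.3]{Kork} together with the definition of a crosscap slide. Your outline is precisely an unpacking of that sentence, tracing $j$ via Korkmaz's lemmas and then transporting through the blowup $\varphi$; the bookkeeping you flag (matching supports under blowup and tracking the sign $\varepsilon$) is exactly what is left implicit in the paper's one-line justification.
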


Lemma \ref{push} suggests the following generalization of the definition of a crosscap slide. Let $\alpha$ and $\beta$ be one-sided curves intersecting in one point. Then we define  $Y_{\alpha,\beta}$ as
\[Y_{\alpha,\beta}=(T_{\delta_1} T_{\delta_2}^{-1})^\varepsilon,\]
where $\delta_1$ and $\delta_2$ are as in Lemma \ref{push} (see Figure \ref{Y2}).
For such generalized definition the properties (\ref{y^-11}), (\ref{y^-1}) and (\ref{hyh}) remain valid.

\begin{figure}
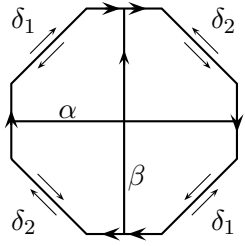

\pspicture*(3.5,3.5)
\rput[tl](.25,3.25){\small$\delta_1$}
\rput[tr](3.25,3.25){\small$\delta_2$}
\rput[bl](.25,.25){\small$\delta_2$}
\rput[br](3.25,.25){\small$\delta_1$}
\rput[b](1,1.8){\small$\alpha$}
\rput[l](1.8,1){\small$\beta$}
\psline[arrowsize=4pt]{->}(1.75,2.5)(1.75,2.7)
\psline(.25,1.25)(.25,2.25)(1.25,3.25)(2.25,3.25)(3.25,2.25)(3.25,1.25)
(2.25,.25)(1.25,.25)(.25,1.25)
\psline[arrowsize=5pt]{->}(.25,1.7)(.25,1.9)
\psline[arrowsize=5pt]{<-}(3.25,1.6)(3.25,1.8)
\psline[arrowsize=5pt]{->}(1.5,3.25)(1.7,3.25)
\psline[arrowsize=5pt]{->}(1.85,3.25)(2.05,3.25)
\psline[arrowsize=5pt]{<-}(1.8,.25)(2,.25)
\psline[arrowsize=5pt]{<-}(1.45,.25)(1.56,.25)
\psline(.25,1.75)(3.25,1.75)
\psline(1.75,.25)(1.75,3.25)
\psline[linewidth=.4pt]{->}(.5,2.65)(.85,3)
\psline[linewidth=.4pt]{<-}(.6,2.45)(.95,2.8)
\psline[linewidth=.4pt]{->}(3,2.65)(2.65,3)
\psline[linewidth=.4pt]{<-}(2.9,2.45)(2.55,2.8)
\psline[linewidth=.4pt]{<-}(.5,.85)(.85,.5)
\psline[linewidth=.4pt]{->}(.6,1.05)(.95,.7)
\psline[linewidth=.4pt]{<-}(3,.85)(2.65,.5)
\psline[linewidth=.4pt]{->}(2.9,1.05)(2.55,.7)
\endpspicture
\caption{\label{Y2}  $Y_{\alpha,\beta}$ for two one-sided curves.}
\end{figure}

\section{Generators of the level 2 mapping class group}
Let us  represent $N_g$ as a 2-sphere with $g$ crosscaps. This means that interiors of $g$ small pairwise disjoint discs should be removed from the sphere, and then antipodal points in each of the resulting boundary components should be identified. Let us arrange the crosscaps as shown on Figure \ref{aI} and number them from $1$ to $g$. 
For each nonempty subset $I\subseteq\{1,\dots,g\}$ let $\alpha_I$ be the simple closed curve shown on Figure \ref{aI}. For $I=\{i_1,\dots,i_k\}$ let $|I|=k$. Note that $\alpha_I$ is two-sided if and only if $|I|$ is even. In such case $T_{\alpha_I}$ will be Dehn twist about $\alpha_I$ in the direction indicated by arrows on Figure \ref{aI}. We will write $\alpha_i$ instead of $\alpha_{\{i\}}$ for $i\in\{1,\dots,g\}$.
\begin{figure}
\input{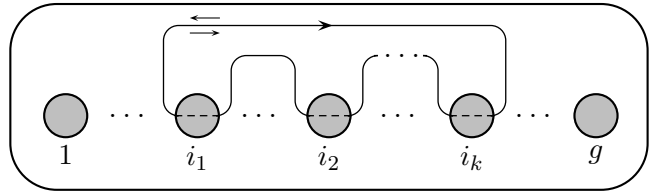}
\caption{\label{aI} The curve $\alpha_I$ for $I=\{i_1,i_2,\dots,i_k\}$.}
\end{figure}

It is well know that $\M(N_1)$ is trivial and $\M(N_2)\approx\Z_2\times\Z_2$ (\cite{Lick1}). It follows easily from the structure of $\M(N_2)$ that $\Gamma_2(N_2)$ has order two and it is generated by a crosscap slide.  
The following theorem can be deduced from Chillingworth's result by reducing the generating set given in \cite{Chill} by using the method of Humphries \cite{Hum1} (cf. \cite[Theorem 3]{Szep3}).
\begin{theorem}\label{chil}
For $g\ge 4$ the mapping class group $\M(N_g)$ is generated by the following elements.
\begin{itemize}
\item $Y=Y_{\alpha_{1},\alpha_{\{1,2\}}}$, 
\item $A_i=T_{\alpha_{\{i,i+1\}}}$ for $i=1,\dots,g-1$,
\item $B=T_{\alpha_{\{1,2,3,4\}}}$.
\end{itemize}
The group $\M(N_3)$ is generated by  $Y$, $A_1$, $A_2$.
\end{theorem}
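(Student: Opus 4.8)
The plan is to obtain Theorem~\ref{chil} by reducing Chillingworth's finite generating set for $\M(N_g)$ from \cite{Chill}, following the scheme by which Humphries reduced Lickorish's $3g-1$ Dehn twists to his $2g+1$ generators of the orientable mapping class group in \cite{Hum1} (the nonorientable version being handled as in \cite[Theorem~3]{Szep3}). Set $H=\langle Y,A_1,\dots,A_{g-1},B\rangle$ for $g\ge4$, and $H=\langle Y,A_1,A_2\rangle$ for $g=3$. Since a subgroup containing a generating set is the whole group, it is enough to check that every element of Chillingworth's set lies in $H$. A preliminary step is to rewrite that set in the notation of Figure~\ref{aI}: up to conjugation and inversion it consists of the crosscap slide $Y$, the twists $A_i=T_{\alpha_{\{i,i+1\}}}$ about pairs of consecutive crosscaps, and twists about a bounded number of further curves $\alpha_{\{1,2,\dots,2k\}}$ (and, possibly, some $\alpha_{\{i,j\}}$ with $j>i+1$). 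Of these, $Y$ and the $A_i$ lie in $H$ by definition, and the task is to produce the remaining finitely many elements inside $H$.

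For this I would invoke three standard families of relations, always working inside $H$. First, \emph{braid relations}: if simple closed curves $a$ and $b$ meet in one point then $T_aT_bT_a=T_bT_aT_b$, so $T_aT_b$ conjugates $T_a$ to $T_b$; combining this with property~(\ref{hyh}) and conjugating the $A_i$ and already-built elements by elements of $H$ yields the twists $T_{\alpha_{\{i,j\}}}$ and the crosscap slides $Y_{\alpha_i,\alpha_{\{i,j\}}}$ occurring in Chillingworth's list. Second, \emph{chain relations}: a twist about a curve $\alpha_{\{1,\dots,2k\}}$ running through a long block of crosscaps can be expressed, by a chain relation along $\alpha_1,\alpha_{\{1,2\}},\alpha_{\{1,2,3\}},\dots$, in terms of twists about curves running through shorter blocks and a single boundary twist; iterating this reduces every ``long'' twist in the list to the $A_i$ and one reference curve. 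Third --- and this is the step responsible for the extra generator $B$ --- for $g\ge4$ the surface $N_g$ contains a four-holed sphere whose seven curves are (conjugates under $H$ of) curves among the $\alpha_I$, one of them being $\alpha_{\{1,2,3,4\}}$, and the lantern relation on that subsurface writes the twist about one of these curves in terms of the twists about the other six; this is precisely the trade that leaves exactly one surplus twist, namely $B=T_{\alpha_{\{1,2,3,4\}}}$. Putting the three steps together shows that Chillingworth's set lies in $H$, hence $H=\M(N_g)$.

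For $g=3$ the lantern step is not available: an Euler-characteristic count shows that any four-holed sphere in $N_3$ has one of its four complementary pieces a disc (the other three being M\"obius bands), so the lantern relation degenerates and yields no reduction. The case $g=3$ therefore has to be settled separately; the most economical route is to use the known structure of $\M(N_3)$ --- the isomorphism $\M(N_3)\cong\mathrm{GL}(2,\Z)$ of Birman and Chillingworth --- compute the images of $Y$, $A_1$ and $A_2$ under this isomorphism, and verify directly that these three matrices generate $\mathrm{GL}(2,\Z)$ (alternatively, one reduces Chillingworth's generators for $N_3$ by hand using the relations that survive). The main obstacle is the bookkeeping in the second and third steps: each of Chillingworth's generators must be followed through an explicit sequence of conjugations and relations and shown to land in $H$, and since $N_g$ carries no global orientation every braid, chain and lantern relation has to be applied with a coherently chosen orientation of the relevant local patch (cf.\ the sign ambiguity of $T_\gamma$ discussed in Section~2). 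The short $g=3$ case genuinely needs its own argument.
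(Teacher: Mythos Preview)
Your plan is essentially the one the paper itself indicates: the paper does not give a proof of Theorem~\ref{chil} at all, but simply records (in the sentence preceding the statement) that it ``can be deduced from Chillingworth's result by reducing the generating set given in \cite{Chill} by using the method of Humphries \cite{Hum1} (cf.\ \cite[Theorem~3]{Szep3})''. Your outline --- start from Chillingworth's finite set, then eliminate the surplus twists via braid conjugations and a lantern relation, leaving the single extra twist $B$ --- is exactly that reduction, and your explicit citation of \cite[Theorem~3]{Szep3} matches the paper's pointer to where the bookkeeping has already been done.

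One small divergence worth flagging: for $g=3$ you propose to invoke the Birman--Chillingworth isomorphism $\M(N_3)\cong GL(2,\Z)$ and check that the images of $Y,A_1,A_2$ generate $GL(2,\Z)$. In the paper the logical flow is the reverse: Theorem~\ref{chil} (for $g=3$) is taken as input, and Proposition~\ref{gl2z} then \emph{derives} the isomorphism by writing down the images $\rho(A_1),\rho(A_2),\rho(Y)$ and exhibiting an inverse homomorphism. Your route is perfectly valid, but if you later use Proposition~\ref{gl2z} as stated you should make sure you are not arguing in a circle; the cleanest fix is to handle $g=3$ the same way as $g\ge4$, directly from Chillingworth's generators (which for $N_3$ are already essentially $Y,A_1,A_2$ and need no lantern step).
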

The following theorem is the main result of this paper.
\begin{theorem}\label{main}
For $g\ge 4$ the level 2 mapping class group $\Gamma_2(N_g)$ is generated by the following elements.
\begin{itemize}
\item[(1)] $Y_{\alpha_i,\alpha_{\{i,j\}}}$ for $i\in\{1,\dots,g-1\}$, $j\in\{1,\dots,g\}$ and $i\ne j$, 
\item[(2)] $Y_{\alpha_{\{i,j,k\}},\alpha_{\{i,j,k,l\}}}$ for $i<j<k<l$.
\end{itemize}
The group $\Gamma_2(N_3)$ is generated by the elements (1).
\end{theorem}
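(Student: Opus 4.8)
\medskip

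\noindent{\bf Proof proposal.}
Let $H\le\M(N_g)$ be the subgroup generated by the elements listed in~(1) and~(2) (by the elements~(1) alone when $g=3$). Each of these elements is a (generalized) crosscap slide, so $H\subseteq\Gamma_2(N_g)$; moreover the honest crosscap slide $Y=Y_{\alpha_1,\alpha_{\{1,2\}}}$ of Theorem~\ref{chil} is the element~(1) with $i=1$, $j=2$, so $Y\in H$. The idea is not to write an arbitrary crosscap slide as a word in the listed generators, but to use the theorem of~\cite{Szep2} recalled in the introduction, according to which $\Gamma_2(N_g)$ is the normal closure in $\M(N_g)$ of a single crosscap slide. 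For $g\ge3$ any two crosscap slides are carried onto one another by a homeomorphism of $N_g$ (the complement of a regular neighbourhood of $\alpha\cup\beta$ being determined up to homeomorphism), so this normal closure equals the normal closure of $Y$. Hence it suffices to prove that $H$ is a \emph{normal} subgroup of $\M(N_g)$: then $\Gamma_2(N_g)$, being the normal closure of $Y$ and $Y\in H$, is contained in $H\subseteq\Gamma_2(N_g)$, so $H=\Gamma_2(N_g)$.

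To establish normality we invoke Theorem~\ref{chil}: it is enough to verify that conjugating each generator of $H$ by each of the elements $Y^{\pm1}$, $A_1^{\pm1},\dots,A_{g-1}^{\pm1}$, $B^{\pm1}$ (the twist $B$ entering only for $g\ge4$) yields an element of $H$. By~(\ref{hyh}), $h\,Y_{\alpha_I,\alpha_J}\,h^{-1}=Y_{h(\alpha_I),h(\alpha_J)}$ for $h\in\M(N_g)$, so the task becomes geometric: for $h$ one of these twists or $Y$, identify the isotopy classes of $h(\alpha_I)$ and $h(\alpha_J)$ and recognise the crosscap slide $Y_{h(\alpha_I),h(\alpha_J)}$ as a product of elements~(1) and~(2). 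Most pairs $(I,J)$ cost nothing: $A_i$ is supported in an annulus around the crosscaps $i,i+1$, $B$ around $1,2,3,4$, and $Y$ around $1,2$, so whenever $\alpha_I\cup\alpha_J$ can be isotoped off this support the conjugation is trivial. Only finitely many overlapping configurations survive, and for each of these one computes the surgered curve and simplifies it using the basic relations available: (\ref{y^-11})--(\ref{y^-1}), the fact that $Y_{\alpha,\beta}^{\,2}$ is a Dehn twist about $\bdr K$, the chain and braid relations among the $A_i$, and the rewriting of products $T_{\delta_1}T_{\delta_2}^{-1}$ as generalized crosscap slides given by Lemma~\ref{push}.

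I expect this last step to be the real obstacle: each individual verification is routine, but there are many cases and the curves $h(\alpha_I),h(\alpha_J)$ must be brought into standard position with care. One point is worth flagging in advance, since it explains the shape of the generating set: conjugating a type-(1) generator $Y_{\alpha_i,\alpha_{\{i,j\}}}$ by $B=T_{\alpha_{\{1,2,3,4\}}}$, when $\{i,j\}$ meets $\{1,2,3,4\}$, produces a crosscap slide based at a one-sided curve isotopic to some $\alpha_{\{i,j,k\}}$ with three indices --- that is, a type-(2) generator. This is precisely why the elements~(2) must be included when $g\ge4$, and why they are absent when $g=3$, where $B$ does not exist and the only conjugators are $Y^{\pm1},A_1^{\pm1},A_2^{\pm1}$. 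The smallest genera, having the least room in which to isotope curves, may need to be inspected separately.
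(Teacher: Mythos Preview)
Your high-level strategy coincides with the paper's: set $H$ equal to the subgroup generated by (1) and (2), observe $Y\in H\subseteq\Gamma_2(N_g)$, invoke the result of \cite{Szep2} that $\Gamma_2(N_g)$ is the normal closure of $Y$, and then verify that $H$ is normal by conjugating its generators by the Chillingworth generators. (Since $Y$ is already one of your generators, conjugation by $Y^{\pm1}$ is automatic and needn't be checked.) So far so good.

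The gap is in the part you label ``the real obstacle''. You anticipate that each conjugate $Y_{h(\alpha_I),h(\alpha_J)}$ can be simplified using only (\ref{y^-11})--(\ref{hyh}), the square relation, braid relations, and Lemma~\ref{push} read as a rewriting rule. In the paper this is \emph{not} enough: one first proves three auxiliary lemmas to the effect that $Y_{\alpha_i,\beta}\in H$ for \emph{every} $\beta$ meeting $\alpha_i$ once, that $Y_{\alpha_I,\beta}\in H$ for \emph{every} such $\beta$ when $|I|=3$, and that $Y_{\alpha_I,\alpha_J}\in H$ when $|I|=5$, $|J|=6$. The mechanism is not case-by-case curve surgery but the crosscap pushing map $\psi\colon\pi_1(N',x_0)\to\M(N_g)$ used as a \emph{group homomorphism}: once one knows that the standard generators of $\pi_1$ are sent to generators of $H$ (or of a suitable conjugate of a subgroup of $H$), every $Y_{\alpha_i,\beta}$ in the image of $\psi$ lands in $H$ automatically. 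Along the way one also needs $T^2_{\alpha_I}\in H$ for $|I|\in\{2,4\}$, obtained via an identity of the shape $T_\beta^2=Y_{T_\beta(\alpha),\beta}Y_{\alpha,\beta}^{-1}$; this is what lets the paper check only one of $A_ixA_i^{-1}$, $A_i^{-1}xA_i$ (and likewise for $B$), and it feeds into the $|I|=3$ lemma.

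Concretely, your explanation of why type-(2) generators appear is slightly off. Conjugating a type-(1) generator $Y_{\alpha_i,\alpha_{\{i,j\}}}$ by $B$ (with $i\le4$) replaces $\alpha_i$ by some $\alpha_{\{1,2,3,4\}\setminus\{i\}}$, but $B(\alpha_{\{i,j\}})$ is in general \emph{not} one of the standard curves $\alpha_J$, so the result is not literally a type-(2) generator; one needs the $|I|=3$ lemma above to conclude. Similarly, conjugating a type-(2) generator by $B$ can produce a crosscap slide along a curve $\alpha_{I'}$ with $|I'|=5$, which is why the $|I|=5$ lemma is needed. Without these preparatory results the ``routine'' verifications do not close up.
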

\begin{rem}
There are $(g-1)^2$ generators of type (1) and $\binom{g}{4}$ generators of type (2).
Thus we have $4$ generators for $\Gamma_2(N_3)$ and $10$ generators for $\Gamma_2(N_4)$. We will show in Section \ref{low} that these are minimal numbers of generators for these groups. We do not expect that Theorem \ref{main} provides minimal number of generators for $\Gamma_2(N_g)$ for $g>4$. 
\end{rem}

Let $G$ be the subgroup of $\M(N_g)$ generated by the elements (1), (2) from Theorem \ref{main}. Our goal is to prove that $G=\Gamma_2(N_g)$. First we need to prove some lemmas.

\begin{lemma}\label{t^21}
Suppose that $\alpha$ and $\beta$ are two simple closed curves 
such that $\alpha$ is one-sided, $\beta$ is two-sided and they
intersect in one point. Then 
\[T_\beta^2=Y_{T_\beta(\alpha),\beta}Y^{-1}_{\alpha,\beta}\]
\end{lemma}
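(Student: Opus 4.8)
The plan is to derive the formula formally from the conjugation rule (\ref{hyh}), reducing it to a single geometric fact about a crosscap slide.

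First I would use that a Dehn twist fixes its defining curve, $T_\beta(\beta)=\beta$, so applying (\ref{hyh}) to $h=T_\beta$ gives
\[ Y_{T_\beta(\alpha),\beta}=Y_{T_\beta(\alpha),T_\beta(\beta)}=T_\beta\, Y_{\alpha,\beta}\, T_\beta^{-1} \]
(the curve $T_\beta(\alpha)$ is again one-sided and meets $\beta$ in one point, and by (\ref{y^-11}) its orientation is irrelevant). Consequently
\[ Y_{T_\beta(\alpha),\beta}\,Y^{-1}_{\alpha,\beta}=T_\beta\,(Y_{\alpha,\beta}\,T_\beta^{-1}\,Y^{-1}_{\alpha,\beta}), \]
so the assertion $T_\beta^2=Y_{T_\beta(\alpha),\beta}Y^{-1}_{\alpha,\beta}$ is equivalent, after cancelling one factor $T_\beta$ and taking inverses, to the identity
\[ Y_{\alpha,\beta}\,T_\beta\,Y^{-1}_{\alpha,\beta}=T_\beta^{-1}. \]

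It remains to establish this last equality. Since $Y_{\alpha,\beta}$ is the identity outside the Klein bottle with a hole $K$ that serves as a regular neighbourhood of $\alpha\cup\beta$, and $\beta$ lies in the interior of $K$, the conjugate $Y_{\alpha,\beta}T_\beta Y^{-1}_{\alpha,\beta}$ is a Dehn twist about $Y_{\alpha,\beta}(\beta)$. I would check, directly from the definition of the crosscap slide illustrated in Figure \ref{Y} (working inside $K$), that $Y_{\alpha,\beta}(\beta)$ is isotopic to $\beta$, but that $Y_{\alpha,\beta}$ sends the chosen orientation of a regular neighbourhood of $\beta$ to the opposite one: dragging the M\"obius band $M$ once around $\beta$ carries the core $\beta$ back onto itself with reversed co-orientation. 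Hence the right twist about $Y_{\alpha,\beta}(\beta)$ with respect to the image orientation is the left twist about $\beta$, i.e.\ equals $T_\beta^{-1}$, which is exactly the required identity.

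The one non-formal step — hence the part I expect to be the main obstacle — is this verification that the crosscap slide reverses, rather than preserves, the twisting direction along $\beta$; some care is needed because on a nonorientable surface the direction of $T_\beta$ is not intrinsic. Alternatively one may quote this from the known presentation of the mapping class group of a Klein bottle with one boundary component, in which the relation $Y_{\alpha,\beta}T_\beta Y^{-1}_{\alpha,\beta}=T_\beta^{-1}$ occurs directly.
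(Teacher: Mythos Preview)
Your proof is correct and follows essentially the same approach as the paper: both arguments combine the conjugation rule (\ref{hyh}) applied to $h=T_\beta$ with the geometric fact that $Y_{\alpha,\beta}$ preserves $\beta$ while reversing the orientation of its neighbourhood, yielding $Y_{\alpha,\beta}T_\beta Y_{\alpha,\beta}^{-1}=T_\beta^{-1}$. The only difference is the order of presentation---the paper states the orientation-reversal identity first and then applies (\ref{hyh}), whereas you first reduce to that identity and then verify it.
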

\begin{proof}
Since $Y_{\alpha,\beta}$ preserves the curve $\beta$ and reverses orientation of its neighbourhood, we have \[Y_{\alpha,\beta}T_\beta Y^{-1}_{\alpha,\beta}=T^{-1}_{\beta}.\] 
On the other hand, by (\ref{hyh}) we have
\begin{align*}
&T_{\beta}Y_{\alpha,\beta}T^{-1}_{\beta}=Y_{T_\beta(\alpha),\beta},\\
&T_{\beta}Y_{\alpha,\beta}T^{-1}_{\beta}Y^{-1}_{\alpha,\beta}=Y_{T_\beta(\alpha),\beta}Y^{-1}_{\alpha,\beta},\\
&T_\beta^2=Y_{T_\beta(\alpha),\beta}Y^{-1}_{\alpha,\beta}.
\end{align*}
\end{proof}

\begin{lemma}\label{t^22}
Suppose that $\alpha$ and $\beta$ are one-sided simple closed curves intersecting in one point. Let $\delta$ be a boundary curve of a regular neighbourhood of $\alpha\cup\beta$. Then
\[T^2_\delta=Y_{\alpha,\beta}^{\varepsilon_1}Y_{\beta,\alpha}^{\varepsilon_2},\]
where $\varepsilon_i$ is $1$ or $-1$ for $i=1,2$.
\end{lemma}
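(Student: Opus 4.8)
The plan is to rewrite both sides in the free abelian group generated by the two Dehn twists attached to the boundary of a regular neighbourhood of $\alpha\cup\beta$, reducing the lemma to a one‑line computation once the relevant twist directions have been identified.

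First I would record the shape of the neighbourhood. Since $\alpha\cup\beta$ is a wedge of two circles, a regular neighbourhood $M$ of $\alpha\cup\beta$ has Euler characteristic $-1$; as $\alpha$ and $\beta$ are both one‑sided, $M$ is nonorientable, so $M$ is a projective plane with two holes and $\bdr{M}$ consists of exactly two curves, which are the curves $\delta_1,\delta_2$ occurring in the generalized definition of a crosscap slide. In particular the curve $\delta$ of the statement is $\delta_1$ or $\delta_2$. Cutting $M$ along $\alpha$, or along $\beta$, lowers the nonorientable genus to $0$ and adds a boundary component, hence produces an orientable surface of Euler characteristic $-1$ with three boundary components, i.e.\ a pair of pants; in particular $\alpha$ and $\beta$ are each disjoint from $\delta_1\cup\delta_2$, and $T_{\delta_1}$ and $T_{\delta_2}$ commute because $\delta_1\cap\delta_2=\emptyset$. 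Fixing orientations of annular neighbourhoods of $\delta_1$ and of $\delta_2$ as a reference, the generalized definition lets me write, for suitable signs $\varepsilon,\varepsilon',a,b\in\{1,-1\}$,
\[ Y_{\alpha,\beta}=(T_{\delta_1}T_{\delta_2}^{-1})^{\varepsilon},\qquad Y_{\beta,\alpha}=(T_{\delta_1}^{a}T_{\delta_2}^{b})^{\varepsilon'}, \]
where $a,b$ arise from comparing the orientation of $M\setminus\beta$ used in the definition with the chosen reference orientations.

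The crux is the claim that $ab=1$, equivalently that the orientations of the two pairs of pants $M\setminus\alpha$ and $M\setminus\beta$ induce opposite orientations on exactly one of the curves $\delta_1,\delta_2$. I would verify this directly in the concrete model of $M$ in Figure \ref{Y2}: cutting the octagon along the horizontal diameter ($\alpha$) and along the vertical diameter ($\beta$), reassembling the resulting pairs of pants via the edge identifications, and comparing the induced boundary orientations along $\delta_1$ and along $\delta_2$. Equivalently, the reflection of the model interchanging $\alpha$ and $\beta$ fixes $\delta_1$ and $\delta_2$ setwise and preserves the local orientation near exactly one of them, so by (\ref{hyh}) it conjugates $Y_{\alpha,\beta}$ to an element of the required form $Y_{\beta,\alpha}=(T_{\delta_1}T_{\delta_2})^{\pm1}$. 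This is the only genuinely geometric point, and the main obstacle; it is exactly what makes the lemma nontrivial, since if instead $ab=-1$ then both $Y_{\alpha,\beta}$ and $Y_{\beta,\alpha}$ would lie in the rank‑one subgroup $\{T_{\delta_1}^nT_{\delta_2}^{-n}\mid n\in\Z\}$, which contains no nontrivial power of a single $\delta_i$.

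Granting $ab=1$, after renaming the reference orientations I may assume $Y_{\alpha,\beta}=(T_{\delta_1}T_{\delta_2}^{-1})^{\varepsilon}$ and $Y_{\beta,\alpha}=(T_{\delta_1}T_{\delta_2})^{\varepsilon'}$. Using that $T_{\delta_1}$ and $T_{\delta_2}$ commute and that $\varepsilon^2=(\varepsilon')^2=1$,
\[ Y_{\alpha,\beta}^{\varepsilon}Y_{\beta,\alpha}^{\varepsilon'}=T_{\delta_1}T_{\delta_2}^{-1}T_{\delta_1}T_{\delta_2}=T_{\delta_1}^2,\qquad Y_{\alpha,\beta}^{-\varepsilon}Y_{\beta,\alpha}^{\varepsilon'}=T_{\delta_1}^{-1}T_{\delta_2}T_{\delta_1}T_{\delta_2}=T_{\delta_2}^2, \]
so each of $T_{\delta_1}^2$ and $T_{\delta_2}^2$ has the asserted form $Y_{\alpha,\beta}^{\varepsilon_1}Y_{\beta,\alpha}^{\varepsilon_2}$ with $\varepsilon_1,\varepsilon_2\in\{1,-1\}$. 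Since $\delta\in\{\delta_1,\delta_2\}$, this proves the lemma.
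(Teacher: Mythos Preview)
Your argument is correct and follows essentially the same route as the paper: write $Y_{\alpha,\beta}$ and $Y_{\beta,\alpha}$ as products of the boundary twists $T_{\delta_1},T_{\delta_2}$, observe that the orientations of $M\setminus\alpha$ and $M\setminus\beta$ disagree near exactly one of $\delta_1,\delta_2$ (your $ab=1$, the paper's ``one of the twists $T_{\delta_i}$ is right, while the other one is left''), and multiply. The paper simply asserts this orientation comparison in one sentence, whereas you propose to check it in the octagon model or via a reflection; otherwise the proofs are identical.
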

\begin{proof}
Let $\delta=\delta_1$ and $\delta_2$ be the boundary curves of a regular neighbourhood $M$ of $\alpha\cup\beta$ and suppose that $T_{\delta_1}$ and $T_{\delta_2}$ are right twists with respect to some orientation of $M\backslash\alpha$. Thus  $Y_{\alpha,\beta}=(T_{\delta_1} T_{\delta_2}^{-1})^{\varepsilon_1}.$ Observe that with respect to any orientation of $M\backslash\beta$ one of the twists $T_{\delta_i}$ is right, while the other one is left. Hence  $Y_{\beta,\alpha}=(T_{\delta_1} T_{\delta_2})^{\varepsilon_2}$ and the lemma follows.
 \end{proof}

\begin{lemma}\label{I1}
For $i\in\{1,\dots,g\}$ and for every simple closed curve $\beta$ intersecting $\alpha_i$ in one point we have $Y_{\alpha_i,\beta}\in G$. Moreover, $Y_{\alpha_i,\beta}$ can be written as a product of generators of type (1).  
\end{lemma}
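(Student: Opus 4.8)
The plan is to reduce the general statement to the case of the \emph{standard} curves $\alpha_{\{i,j\}}$ by a change-of-coordinates argument. Specifically, given an arbitrary simple closed curve $\beta$ meeting $\alpha_i$ transversely in a single point, a regular neighbourhood of $\alpha_i\cup\beta$ is a Klein bottle with one hole (if $\beta$ is two-sided) or a once-punctured Klein bottle carrying the generalized $Y$ from Figure \ref{Y2} (if $\beta$ is one-sided). I would like to find $h\in G$ such that $h(\alpha_i)=\alpha_i$ and $h(\beta)=\alpha_{\{i,j\}}$ for some $j\ne i$; then by \eqref{hyh} we get $Y_{\alpha_i,\beta}=h^{-1}Y_{\alpha_i,\alpha_{\{i,j\}}}h$, and since $Y_{\alpha_i,\alpha_{\{i,j\}}}$ is a generator of type (1) and $h\in G$, this lands in $G$. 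The delicate point is that we are only allowed conjugating elements \emph{from $G$}, so I cannot freely invoke the usual change-of-coordinates principle for $\M(N_g)$; I must exhibit the needed mapping class as a product of the type (1) generators listed in Theorem \ref{main}.

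The key steps, in order, would be as follows. First, handle the case $\beta=\alpha_{\{i,j\}}$ directly: this is a type (1) generator when $i\le g-1$, and when $i=g$ I would use relation \eqref{hyh} together with the $Y$-homeomorphisms already available to slide the crosscap, or appeal to Lemma \ref{t^21}/\ref{t^22} to rewrite $Y_{\alpha_g,\alpha_{\{g,j\}}}$ in terms of twists $T_{\alpha_{\{j,g\}}}^{2}$ and generators $Y_{\alpha_j,\ast}$ with $j\le g-1$. Second, I would build up a supply of elements of $G$ by composing type (1) generators: products of the form $Y_{\alpha_i,\alpha_{\{i,j\}}}Y_{\alpha_i,\alpha_{\{i,k\}}}^{-1}$ and their conjugates realize various ``elementary'' homeomorphisms supported near the crosscaps, and by Lemma \ref{t^21} squares of Dehn twists $T_{\alpha_{\{i,j\}}}^2$ about two-sided standard curves also lie in $G$. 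Third, using these building blocks I would show that the subgroup of $\Stab(\alpha_i)\cap\Gamma_2(N_g)$ needed to move any admissible $\beta$ to a standard $\alpha_{\{i,j\}}$ is already contained in $G$; the combinatorial input is that two simple closed curves each meeting $\alpha_i$ once, with diffeomorphic regular neighbourhoods of the union, are carried to one another by an ambient homeomorphism, and one checks this homeomorphism can be taken in $\Gamma_2(N_g)$ and decomposed into the above generators. Finally, assemble: $Y_{\alpha_i,\beta}=h^{-1}Y_{\alpha_i,\alpha_{\{i,j\}}}h\in G$, and tracking the decomposition shows it is a product of type (1) generators, proving the ``moreover'' clause.

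The main obstacle I anticipate is precisely the third step: making the change-of-coordinates homeomorphism $h$ effective, i.e. writing it explicitly (or at least provably) as a word in the type (1) generators rather than in all of $\M(N_g)$. One natural route around this is to exploit the crosscap pushing description: by Lemma \ref{push}, the generators $Y_{\alpha_i,\alpha_{\{i,j\}}}$ are images under the crosscap pushing map $\psi$ of a spanning set of \emph{simple} loops in $N_{g-1}$ based at $x_0$, and conjugation relation \eqref{hyh} intertwines $\psi$ with the $\M(N_{g-1},x_0)$-action on $\pi_1(N_{g-1},x_0)$; so controlling $\beta$ up to the $G$-action reduces to controlling a simple loop up to the action of a subgroup of $\M(N_{g-1},x_0)$ generated by explicit twists, which is more tractable. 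I would expect to need an induction on genus here, with the genus-$3$ base case (where only type (1) generators are claimed, and $\Gamma_2(N_3)$ is small) checked by hand, and I would also keep careful track of directions of twists and orientations of Möbius-strip neighbourhoods, since on a nonorientable surface the signs $\varepsilon_i$ in Lemmas \ref{t^21} and \ref{t^22} are genuinely ambiguous and must be pinned down to ensure the relevant products actually lie in $G$.
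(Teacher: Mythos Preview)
Your change-of-coordinates strategy is heading in a harder direction than necessary, and it has a real gap: if $\beta$ is one-sided then no homeomorphism can carry it to the two-sided curve $\alpha_{\{i,j\}}$, so the conjugation $h^{-1}Y_{\alpha_i,\alpha_{\{i,j\}}}h$ you propose cannot produce $Y_{\alpha_i,\beta}$ in that case. The lemma \emph{is} used later for one-sided $\beta$ (e.g.\ with $\beta=\alpha_I$, $|I|=3$, in the proof of Lemma~\ref{I3}), so this case cannot be ignored. Even for two-sided $\beta$ you correctly identify the obstacle---producing $h$ inside $G$ rather than in all of $\M(N_g)$---but give no concrete mechanism for overcoming it.

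The crosscap pushing idea you mention only as a workaround is in fact the whole proof, and it is far simpler than you suggest. You do not need to control simple loops ``up to the action'' of anything, nor any induction on genus. For $i<g$, replace the $i$-th crosscap by a disc to obtain a surface $N'$ of genus $g-1$ with basepoint $x_0$, and consider the crosscap pushing homomorphism $\psi\colon\pi_1(N',x_0)\to\M(N_g)$. By Lemma~\ref{push}, every $Y_{\alpha_i,\beta}$ (for $\beta$ two-sided \emph{or} one-sided) lies in the image of $\psi$. The standard one-sided loops $\gamma_j$ generate $\pi_1(N',x_0)$ as a group, and $\psi([\gamma_j])=Y_{\alpha_i,\alpha_{\{i,j\}}}$ are exactly the type (1) generators. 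Since $\psi$ is a group homomorphism, its image is generated by these, and you are done. No change of coordinates is needed because we are not trying to realize the preimage of $Y_{\alpha_i,\beta}$ as a \emph{simple} loop---any word in the $[\gamma_j]$ suffices.

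For $i=g$ your idea is on target: one uses Lemma~\ref{t^21} with $T_{\alpha_{\{j,g\}}}(\alpha_j)=\alpha_g^{-1}$ to write $Y_{\alpha_g,\alpha_{\{j,g\}}}=T_{\alpha_{\{j,g\}}}^2\,Y_{\alpha_j,\alpha_{\{j,g\}}}$, reducing to showing $T_{\alpha_{\{j,g\}}}^2$ is a product of type (1) generators. The paper does this by a direct geometric construction (cutting along $\alpha_{\{g-1,g\}}$ and expressing the squared twist as a product of crosscap slides $Y_{\alpha_k,\beta_k}$ with $k\le g-2$), after which the crosscap pushing argument above applies verbatim for $i=g$ as well.
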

\begin{proof}
Let $G'$ be the subgroup of $G$ generated by the elements (1).
First we assume $i<g$. Let $N'$ be the surface of genus $g-1$ obtained from $N_g$ by replacing the $i$-th crosscap by a disc $U$ with basepoint $x_0$. As $N_g$ may be seen as being obtained from $N'$ by the blowup construction, we have the corresponding crosscap pushing map
\[\psi\colon\pi_1(N',x_0)\to\M(N_{g}).\]    
Note that $Y_{\alpha_i,\beta}$ is in the image of $\psi$ (see Lemma \ref{push}).
The group $\pi_1(N',x_0)$ is generated by homotopy classes of simple loops 
$[\gamma_j]$ such that $\psi([\gamma_j])=Y_{\alpha_i,\alpha_{\{i,j\}}}$ for $j\in\{1,\dots,i-1,i+1,\dots,g\}$ (in fact $[\gamma_j]$ can be taken to be standard generators of the fundamental group). It follows that  $\psi(\pi_1(N',x_0))\subseteq G'$ and $Y_{\alpha_i,\beta}\in G'$.

Now suppose $i=g$. It suffices to show that $Y_{\alpha_g,\alpha_{\{j,g\}}}\in G'$ $j\in\{1,\dots,g-1\}$, the rest of the proof follows as above. Note that $T_{\alpha_{\{j,g\}}}(\alpha_j)=\alpha_g^{-1}$. By Lemma \ref{t^21} we have  \[Y_{\alpha_g,\alpha_{\{j,g\}}}=T_{\alpha_{\{j,g\}}}^2Y_{\alpha_j,\alpha_{\{j,g\}}}\] and hence it suffices to show that $T^2_{\alpha_{\{j,g\}}}\in G'$.
Without loss of generality we assume $j=g-1$. Let $M$ be the surface obtained by cutting $N_g$ along $\alpha_{\{g-1,g\}}$. Thus $M$ is a nonorientable surface of genus $g-2$ with two boundary components. For $n\in\{1,\dots,g-1\}$ there exist pairwise disjoint two-sided simple closed curves $\delta_n$ in $M$ such that $\bdr{M}=\delta_1\cup\delta_{g-1}$ and $\delta_{k}\cup\delta_{k+1}=\bdr{M_k}$, where 
$M_k$ is a genus one subsurface containing $\alpha_k$ for $k\in\{1,\dots,g-2\}$.
Choose an orientation of $M\backslash(\alpha_1\cup\dots\cup\alpha_{g-2}$ and let $T_{\delta_n}$ be the right Dehn twist with respect to that orientation for $n\in\{1,\dots,g-1\}$. For $k\in\{1,\dots,g-2\}$ there exists a one-sided simple curve $\beta_k$ in $M_k$ intersecting $\alpha_k$ in one point and such that
$Y_{\alpha_k,\beta_k}=T_{\delta_k}T^{-1}_{\delta_{k+1}}$. We have
\[Y_{\alpha_1,\beta_1}Y_{\alpha_2,\beta_2}\cdots Y_{\alpha_{g-2},\beta_{g-2}}=T_{\delta_1}T^{-1}_{\delta_{g-1}},\]
and after recovering $N_g$ by gluing together the boundary curves of $M$ we obtain 
\[Y_{\alpha_1,\beta_1}Y_{\alpha_2,\beta_2}\cdots Y_{\alpha_{g-2},\beta_{g-2}}=T^{\pm 2}_{\alpha_{\{g-1,g\}}}.\]
This completes the proof because $Y_{\alpha_k,\beta_k}\in G'$ for
$k\in\{1,\dots,g-2\}$ by earlier part of the proof.
\end{proof}

\begin{lemma}\label{t23}
For every $I\subseteq\{1,\dots,g\}$ such that $|I|=2$ or $|I|=4$ we have $T^2_{\alpha_I}\in G$.
\end{lemma}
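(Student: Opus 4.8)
The plan is to reduce $T^2_{\alpha_I}$ to a product of squared Dehn twists about curves meeting one-sided $\alpha_i$'s, and then invoke Lemmas~\ref{t^21}, \ref{t^22} and \ref{I1}. First I would treat the case $|I|=2$, say $I=\{i,j\}$ with $i<j$. Since $\alpha_i$ is one-sided and $\alpha_{\{i,j\}}$ is two-sided and they meet in one point, Lemma~\ref{t^21} gives $T^2_{\alpha_{\{i,j\}}}=Y_{T_{\alpha_{\{i,j\}}}(\alpha_i),\alpha_{\{i,j\}}}Y^{-1}_{\alpha_i,\alpha_{\{i,j\}}}$. The second factor is a generator of type~(1) (or its inverse, using \eqref{y^-1}), hence lies in $G$ by Lemma~\ref{I1}. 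For the first factor, the curve $T_{\alpha_{\{i,j\}}}(\alpha_i)$ is again a simple closed curve intersecting $\alpha_i$ in one point — no, more carefully, it intersects $\alpha_j$ in one point after a twist, or one can note $T_{\alpha_{\{i,j\}}}(\alpha_i)=\alpha_j^{\pm1}$ up to isotopy (this is the kind of identity already used in the proof of Lemma~\ref{I1} for $i=g$). Once it is identified as $\alpha_j^{\pm1}$, it is one-sided and meets $\alpha_j$, so $Y_{\alpha_j,\alpha_{\{i,j\}}}\in G$ by Lemma~\ref{I1}, settling $|I|=2$.

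For $|I|=4$, say $I=\{i,j,k,l\}$ with $i<j<k<l$, the curve $\alpha_I$ is two-sided. I want to find a one-sided simple closed curve $\gamma$ meeting $\alpha_I$ in one point so that Lemma~\ref{t^21} applies: then $T^2_{\alpha_I}=Y_{T_{\alpha_I}(\gamma),\alpha_I}Y^{-1}_{\gamma,\alpha_I}$. The natural choice is $\gamma=\alpha_{\{i,j,k\}}$, which is one-sided (odd cardinality) and, after arranging it suitably relative to Figure~\ref{aI}, intersects $\alpha_{\{i,j,k,l\}}$ in exactly one point. With that choice $Y_{\alpha_{\{i,j,k\}},\alpha_{\{i,j,k,l\}}}$ is precisely a generator of type~(2), so $Y^{-1}_{\gamma,\alpha_I}\in G$. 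It then remains to show $Y_{T_{\alpha_I}(\gamma),\alpha_I}\in G$; as in the $|I|=2$ case, I would compute $T_{\alpha_I}(\alpha_{\{i,j,k\}})$ and recognize it (up to isotopy) as another one-sided curve of the form $\alpha_{J}$ with $|J|=3$, namely $\alpha_{\{i,j,l\}}$ or $\alpha_{\{j,k,l\}}$ or similar, so that $Y_{T_{\alpha_I}(\gamma),\alpha_I}$ is again a generator of type~(2) up to \eqref{y^-11} and \eqref{y^-1}. If the relevant curves do not intersect in exactly one point as drawn, I would first conjugate the whole picture by a homeomorphism in $G$ (built from type~(1) generators via Lemma~\ref{I1}, together with the already-available $T^2_{\alpha_{\{i,j\}}}$) using \eqref{hyh} to bring the configuration into standard position, exactly in the spirit of the proof of Lemma~\ref{I1}.

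An alternative route for $|I|=4$, which avoids computing the action of $T_{\alpha_I}$ on $\alpha_{\{i,j,k\}}$, is the cut-and-reglue argument already used at the end of Lemma~\ref{I1}: cut $N_g$ along $\alpha_I$ to get a (possibly disconnected) surface, express a product of right twists about the pieces' boundary curves as a product of crosscap slides $Y_{\alpha_m,\beta_m}$ coming from genus-one subsurfaces, and observe that regluing turns this product into $T^{\pm2}_{\alpha_I}$. Each $Y_{\alpha_m,\beta_m}$ lies in $G$ by Lemma~\ref{I1}. I would keep this as a fallback in case the direct Lemma~\ref{t^21} computation of $T_{\alpha_I}(\alpha_{\{i,j,k\}})$ turns out to be awkward to make precise from the figures.

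The main obstacle I anticipate is the bookkeeping of intersection patterns and sidedness: verifying that the chosen auxiliary curve ($\alpha_{\{i,j,k\}}$ for the degree-four case) really meets $\alpha_I$ in a single point after a suitable isotopy, and correctly identifying the isotopy class of its image under the twist $T_{\alpha_I}$ as another standard curve $\alpha_J$. This is geometric and figure-dependent rather than deep, but it is where the argument could go wrong if the curves in Figure~\ref{aI} are not in the convenient position; the remedy, as noted, is an initial conjugation by an element of $G$.
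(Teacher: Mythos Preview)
Your approach is the same as the paper's: apply Lemma~\ref{t^21} with $\alpha_i$ (for $|I|=2$) or $\alpha_{\{i,j,k\}}$ (for $|I|=4$) as the one-sided curve, and identify the twisted image. The only correction is your expectation in the $|I|=4$ case: $T_{\alpha_I}(\alpha_{\{i,j,k\}})$ is not a three-crosscap curve but rather $\alpha_l^{-1}$, so the first factor becomes $Y_{\alpha_l,\alpha_I}$, which lies in $G$ by Lemma~\ref{I1} (not as a type~(2) generator). This actually makes the argument cleaner than you anticipated, and no conjugation or cut-and-reglue fallback is needed.
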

\begin{proof}
Suppose that $I=\{i,j\}$ where $i<j$. Observe that $T_{\alpha_{\{i,j\}}}(\alpha_i)=\alpha_j^{-1}$. By Lemma \ref{t^21} we have  $T_{\alpha_{\{i,j\}}}^2=Y_{\alpha_j,\alpha_{\{i,j\}}}Y^{-1}_{\alpha_i,\alpha_{\{i,j\}}}\in G$.

Suppose that $I=\{i,j,k,l\}$ where $i<j<k<l$ and let $J=\{i,j,k\}$. Observe that $T_{\alpha_I}(\alpha_J)=\alpha_l^{-1}$. By Lemma \ref{t^21} we have  $T_{\alpha_I}^2=Y_{\alpha_l,\alpha_I}Y^{-1}_{\alpha_J,\alpha_I}$. Since  $Y_{\alpha_l,\alpha_I}\in G$ by Lemma \ref{I1}, and $Y_{\alpha_J,\alpha_I}\in G$ by the definition of G, also $T_{\alpha_I}^2\in G$.
\end{proof}

\begin{lemma}\label{I3}
For every $I=\{i,j,k\}$, where $1\le i<j<k\le g$, and for every two-sided simple closed curve $\beta$ intersecting $\alpha_I$ in one point we have $Y_{\alpha_I,\beta}\in G$.  
\end{lemma}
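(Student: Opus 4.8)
The plan is to imitate the strategy used in the proof of Lemma \ref{I1}, but now realizing $\alpha_I$ as the one-sided curve arising from a blowup of a lower-genus surface. Since $|I|=3$ is odd, $\alpha_I$ is one-sided, so there is a surface $N'$ of genus $g-1$, obtained from $N_g$ by collapsing a M\"obius neighbourhood of $\alpha_I$ to a crosscap and replacing it by a disc $U$ with basepoint $x_0$, together with a crosscap pushing map $\psi\colon\pi_1(N',x_0)\to\M(N_g)$ whose image contains every $Y_{\alpha_I,\beta}$ with $\beta$ two-sided and meeting $\alpha_I$ once (by Lemma \ref{push}). Thus it suffices to show that $\psi(\pi_1(N',x_0))\subseteq G$.

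First I would choose a convenient generating set of $\pi_1(N',x_0)$ by simple loops $[\gamma_m]$, so that $\psi([\gamma_m])=Y_{\alpha_I,\beta_m}$ for a collection of two-sided curves $\beta_m$ (coming from one-sided loops $\gamma_m$) that together generate. The natural candidates are the curves $\beta_m=\alpha_{I\cup\{m\}}$ for $m\notin I$ (when the loop $\gamma_m$ is one-sided) and, to complete the generating set of $\pi_1(N')$, one loop running around the $i$-th, $j$-th, $k$-th crosscaps. Concretely I expect to need: (a) $Y_{\alpha_I,\alpha_{I\cup\{m\}}}\in G$ for each $m\notin I$, and (b) $Y_{\alpha_I,\beta}\in G$ for the one extra two-sided curve $\beta$ realizing the "loop through the crosscaps of $I$" class. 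Item (a) with $m=l>k$ is exactly a generator of type (2) after reindexing, and for the remaining values of $m$ (including $m<i$) one reduces to type-(2) generators by conjugating with elements of $\M(N_g)$ that permute crosscaps and lie in $G$ — or, more safely, by the same $T^2$-trick as in Lemma \ref{I1}: write $Y_{\alpha_I,\alpha_{I\cup\{m\}}}=T^2_{\alpha_{I\cup\{m\}}}Y_{\alpha_{J'},\alpha_{I\cup\{m\}}}$ where $J'$ is the appropriate cardinality-3 subset with $T_{\alpha_{I\cup\{m\}}}(\alpha_{J'})=\alpha_m^{-1}$, using Lemma \ref{t^21}, Lemma \ref{t23} ($T^2_{\alpha_{I\cup\{m\}}}\in G$), and the definition of $G$.

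For item (b), the extra generator of $\pi_1(N')$, I would handle it exactly as the $i=g$ case of Lemma \ref{I1} was handled: if $\beta$ is a one-sided loop whose corresponding two-sided curve meets $\alpha_I$ once, then by Lemma \ref{t^21} $Y_{\alpha_I,\beta}=T_\beta^2\,Y_{\gamma,\beta}$ for a suitable curve $\gamma$ with $T_\beta(\gamma)=\alpha_I^{-1}$ of the same type we have already controlled, so it remains to show $T_\beta^2\in G$; this will follow by cutting $N_g$ along $\beta$ and factoring $T_\beta^{\pm2}$ as a product of crosscap slides $Y_{\alpha_k,\beta_k}$ supported in genus-one pieces, each of which lies in $G$ by Lemma \ref{I1}, in the same manner as the final paragraph of that proof. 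The main obstacle I anticipate is bookkeeping: exhibiting an explicit, geometrically correct generating set of $\pi_1(N',x_0)$ by simple loops realizing precisely the curves $\alpha_{I\cup\{m\}}$ (and being careful that the one "crosscap loop" of $N'$ really is one-sided and yields a curve meeting $\alpha_I$ once), together with verifying the various $T_\beta(\gamma)=\alpha^{-1}$ identities; once the right loops are pinned down, every step reduces to an already-proven lemma.
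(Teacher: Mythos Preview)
Your overall strategy---realize $\alpha_I$ as the crosscap coming from a blowup and use the crosscap pushing map---is the right idea and is essentially what the paper does as well. But there is a real gap in your execution, coming from a miscount of generators of $\pi_1(N')$.

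When you remove a M\"obius neighbourhood of $\alpha_I$ and cap off, the region containing the three crosscaps $i,j,k$ does not become a single crosscap: it becomes a once-holed \emph{torus} (this is exactly why the paper remarks that for $g=3$ the complement of $\alpha_I$ is orientable). So $N'\cong N_{g-1}$ is, in your picture, a sphere with $g-3$ crosscaps at the positions $m\notin I$ connect-summed with a torus. Hence $\pi_1(N')$ needs $g-1$ generators: your $g-3$ one-sided loops $\gamma_m$ with $\psi([\gamma_m])=Y_{\alpha_I,\alpha_{I\cup\{m\}}}$, plus \emph{two} further generators coming from the torus piece, not one. Moreover, those two torus loops are two-sided in $N'$, so by Lemma~\ref{push} their $\psi$-images are the \emph{generalized} crosscap slides $Y_{\alpha_I,\beta}$ with $\beta$ one-sided; the relevant tool is therefore Lemma~\ref{t^22}, not Lemma~\ref{t^21}. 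Your item~(b), which posits a single one-sided extra loop and then tries to reduce via $T_\beta^2$ in the style of the end of Lemma~\ref{I1}, does not match the actual geometry, and the curve $\gamma=T_\beta^{-1}(\alpha_I)$ you would obtain is not ``of the same type already controlled''.

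The paper avoids this bookkeeping problem by an extra conjugation step: pick $n\notin I$, set $J=I\cup\{n\}$, and use $T_{\alpha_J}^{\varepsilon}(\alpha_I)=\alpha_n^{-1}$ to transport everything to the \emph{standard} crosscap $\alpha_n$. Then $N'$ is the standard $N_{g-1}$ with crosscaps at positions $\ne n$, and the standard one-sided generators $x_s$ of $\pi_1(N')$ are explicit. The paper first shows (your item~(a)) that $Y_{\alpha_I,\alpha_J}\in G$ for each $J\supset I$ with $|J|=4$ exactly as you propose, and supplies the two missing generators via $Y_{\alpha_I,\alpha_i}$ and $Y_{\alpha_I,\alpha_k}$, handled by Lemma~\ref{t^22} (one boundary curve of a neighbourhood of $\alpha_I\cup\alpha_i$ is $\alpha_{\{j,k\}}$, so $T^2_{\alpha_{\{j,k\}}}=Y_{\alpha_I,\alpha_i}^{\pm1}Y_{\alpha_i,\alpha_I}^{\pm1}$). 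Your route can be completed along the same lines once you replace the single ``loop through the crosscaps of $I$'' by two generators and invoke Lemma~\ref{t^22}; the conjugation to $\alpha_n$ is not strictly necessary, but it is what turns the verification into an explicit computation with the standard $x_s$.
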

\begin{proof}
Note that $\beta$ actually exists only for $g\ge 4$. Indeed, if $g=3$ then the surface obtained from $N_g$ by cutting along $\alpha_I$ is orientable, and every curve intersecting $\alpha_I$ in one point must be one-sided. Therefore we are assuming $g\ge 4$. 

 Let $H$ be the subgroup of $\M(N)$ generated by $Y_{\alpha_I,\alpha_i}$,
$Y_{\alpha_I,\alpha_k}$ and $Y_{\alpha_I,\alpha_J}$ for every $J$ such that $|J|=4$ and $I\subset J$. We claim that $H\subseteq G$. Consider a regular neighborhood of $\alpha_I\cup\alpha_i$. One of its boundary curves is $\alpha_{\{j,k\}}$ and by Lemma \ref{t^22} we have
\[T^2_{\alpha_{\{j,k\}}}=Y_{\alpha_I,\alpha_i}^{\varepsilon_1}Y_{\alpha_i,\alpha_I}^{\varepsilon_2},\]
where $\varepsilon_l$ is $1$ or $-1$ for $l=1,2$. Since $Y_{\alpha_i,\alpha_I}\in G$ by Lemma \ref{I1} and $T^2_{\alpha_{\{j,k\}}}\in G$ by Lemma \ref{t23}, also $Y_{\alpha_I,\alpha_i}\in G$. By a similar argument  $Y_{\alpha_I,\alpha_k}\in G$.
Let $J=I\cup\{l\}$ for $l\notin\{i,j,k\}$ and note that 
$T^{\pm 1}_{\alpha_J}(\alpha_I)=\alpha_l^{-1}$. By Lemma \ref{t^21} we have 
\[T_{\alpha_J}^{\pm 2}=Y_{\alpha_l,\alpha_J}Y^{-1}_{\alpha_I,\alpha_J},\]
and since $Y_{\alpha_l,\alpha_J}\in G$ by Lemma \ref{I1} and $T^2_{\alpha_J}\in G$ by Lemma \ref{t23}, also $Y_{\alpha_I,\alpha_J}\in G$ and the claim is proved.

Now it suffices to show that $Y_{\alpha_I,\beta}\in H$. This can be done as follows. Choose $n\in \{1,\dots,g\}\backslash\{i,j,k\}$ and observe that for
$J=I\cup\{n\}$ we have $T^\varepsilon_{\alpha_J}(\alpha_I)=\alpha_n^{-1}$, where $\varepsilon=\pm 1$. Let
\[Y=T^\varepsilon_{\alpha_J}Y_{\alpha_I,\beta}T^{-\varepsilon}_{\alpha_J}
=Y_{\alpha_n,T^\varepsilon_{\alpha_J}(\beta)},\qquad
H'=T^\varepsilon_{\alpha_J}HT^{-\varepsilon}_{\alpha_J}.\] 
We need to show that $Y\in H'$. 
Let $N'$ be the surface of genus $g-1$ obtained from $N_g$ by replacing the $n$-th crosscap by a disc $U$ with basepoint $x_0$. We have the crosscap pushing map
\[\psi\colon\pi_1(N',x_0)\to\M(N_{g}).\]     
Since $Y\in \psi(\pi_1(N',x_0))$ it suffices to show that
$\psi(\pi_1(N',x_0))\subseteq H'$ and for that it is enough to check that 
$\pi_1(N',x_0)$ is generated by loops mapped by $\psi$ on generators of $H'$. Let us assume $n=1$. The proof is similar for other $n$.
In this case we have $H'=D^{-1}HD$ for $D=T_{\alpha_{\{1,i,j,k\}}}$. 
For
$s\in\{2,\dots,g\}$ let $x_s$ be the standard generators of $\pi_1(N',x_0)$
 such that $\psi(x_s)=Y_{\alpha_1,\alpha_{\{1,s\}}}$. The group $H'$ is generated by
\begin{align*}
&D^{-1}Y_{\alpha_I,\alpha_i}D=Y_{\alpha_1,\alpha^{-1}_{\{1,j,k\}}}=\psi((x_jx_k)^{-1}),\\
&D^{-1}Y_{\alpha_I,\alpha_k}D=Y_{\alpha_1,\alpha^{-1}_{\{1,i,j\}}}=\psi((x_ix_j)^{-1}),\\
&D^{-1}Y_{\alpha_I,\alpha_{\{1,i,j,k\}}}D=Y_{\alpha_1,\alpha_{\{1,i,j,k\}}}=\psi(x_ix_jx_k),\\
&D^{-1}Y_{\alpha_I,\alpha_{\{l,i,j,k\}}}D=\psi((x_ix_jx_k)^{-1}x_lx_ix_jx_k)\quad\textrm{for\ $1<l<i$},\\
&D^{-1}Y_{\alpha_I,\alpha_{\{i,l,j,k\}}}D=\psi(x_ix_lx_i^{-1})\quad\textrm{for\ $i<l<j$},\\
&D^{-1}Y_{\alpha_I,\alpha_{\{i,j,l,k\}}}D=\psi(x_k^{-1}x_lx_k)\quad\textrm{for\ $j<l<k$},\\
&D^{-1}Y_{\alpha_I,\alpha_{\{i,j,k,l\}}}D=\psi(x_ix_jx_kx_l(x_ix_jx_k)^{-1})\quad\textrm{for\ $k<l\le g$}.
\end{align*}
It is easy to check that each $x_s$ can be expressed as a product of elements which are mapped by $\psi$ on the generators of $H'$. It follows that
$\psi(\pi_1(N',x_0))\subseteq H'$ and the lemma is proved.
\end{proof}

\begin{lemma}\label{I5}
Let $I=\{i_1,\dots,i_5\}$ and $J=\{i_1,\dots,i_5,i_6\}$, where
$1\le i_1<\cdots<i_5<i_6\le g$. Then $Y_{\alpha_I,\alpha_J}\in G$.
\end{lemma}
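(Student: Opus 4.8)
The plan is to obtain $Y_{\alpha_I,\alpha_J}$ from the relation of Lemma \ref{t^21}, applied twice with the two-sided curve $\alpha_J$. Since $|I|=5$ and $|J|=6$, the curve $\alpha_I$ is one-sided and $\alpha_J$ is two-sided, and from Figure \ref{aI} one checks (exactly as for the pair $\alpha_{\{i,j,k\}},\alpha_{\{i,j,k,l\}}$ in Lemma \ref{t23}) that $\alpha_I$ and $\alpha_J$ can be isotoped so as to intersect in a single point. The curve $T_{\alpha_J}(\alpha_I)$ is one-sided and represents $[\alpha_I]+[\alpha_J]$ in $H_1(N_g,\Z_2)$, which is the class of the $i_6$-th crosscap; arguing as for the identities $T_{\alpha_I}(\alpha_J)=\alpha_l^{-1}$ used earlier, $T_{\alpha_J}(\alpha_I)$ is isotopic to $\alpha_{i_6}$ or $\alpha_{i_6}^{-1}$. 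Fixing one of the two twists $T_{\alpha_J}$, Lemma \ref{t^21} together with (\ref{y^-11}) therefore yields
\[T_{\alpha_J}^2=Y_{\alpha_{i_6},\alpha_J}Y^{-1}_{\alpha_I,\alpha_J}.\]
Since $\alpha_J$ meets $\alpha_{i_6}$ in one point, $Y_{\alpha_{i_6},\alpha_J}\in G$ by Lemma \ref{I1}, so it suffices to show $T_{\alpha_J}^2\in G$.

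To that end I would apply Lemma \ref{t^21} again, with the same twist $T_{\alpha_J}$ but now with the one-sided curve $\alpha_L$, where $L=\{i_1,i_2,i_3\}\subset J$. Once more $\alpha_L$ and $\alpha_J$ intersect in one point, and $T_{\alpha_J}(\alpha_L)$, being one-sided and representing $[\alpha_L]+[\alpha_J]=[\alpha_{J\backslash L}]$, is isotopic to $\alpha_{J\backslash L}^{\pm1}$ with $J\backslash L=\{i_4,i_5,i_6\}$. Hence, using (\ref{y^-11}),
\[T_{\alpha_J}^2=Y_{\alpha_{J\backslash L},\alpha_J}Y^{-1}_{\alpha_L,\alpha_J}.\]
Both $L$ and $J\backslash L$ have three elements and $\alpha_J$ is two-sided, so $Y_{\alpha_L,\alpha_J}$ and $Y_{\alpha_{J\backslash L},\alpha_J}$ belong to $G$ by Lemma \ref{I3}; thus $T_{\alpha_J}^2\in G$. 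Equating the two expressions for $T_{\alpha_J}^2$ gives
\[Y_{\alpha_I,\alpha_J}=Y_{\alpha_L,\alpha_J}Y^{-1}_{\alpha_{J\backslash L},\alpha_J}Y_{\alpha_{i_6},\alpha_J}\in G,\]
which would prove the lemma. (The argument needs $g\ge6$, which is forced by $i_1<\cdots<i_6\le g$, so that Lemmas \ref{I1} and \ref{I3} apply.)

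The only non-formal ingredients are the geometric facts used above: that $\alpha_I$ and $\alpha_J$, that $\alpha_L$ and $\alpha_J$, and that $\alpha_{i_6}$ and $\alpha_J$ each meet in exactly one point, and that twisting the relevant ``sub-chain'' curve $\alpha_I$ (respectively $\alpha_L$) about $\alpha_J$ produces, up to isotopy and orientation, the complementary standard curve $\alpha_{i_6}$ (respectively $\alpha_{J\backslash L}$). These are of the same nature as the incidence data and the identities $T_{\alpha_I}(\alpha_J)=\alpha_l^{-1}$ that were used without comment in Lemmas \ref{t23} and \ref{t^21}, and are verified by inspecting Figure \ref{aI}; the orientation ambiguity is harmless because of (\ref{y^-11}), and the choice among the two possible twists $T_{\alpha_J}$ does not matter since $G$ is a subgroup. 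I expect this curve-and-orientation bookkeeping to be the only real work; no group-theoretic input beyond Lemmas \ref{t^21}, \ref{I1} and \ref{I3} is needed.
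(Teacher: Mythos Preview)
Your argument is correct and is essentially identical to the paper's proof: both apply Lemma~\ref{t^21} twice to $T_{\alpha_J}^2$, once with the one-sided curve $\alpha_I$ (yielding $\alpha_{i_6}$) and once with $\alpha_{\{i_1,i_2,i_3\}}$ (yielding $\alpha_{\{i_4,i_5,i_6\}}$), and then conclude via Lemmas~\ref{I1} and~\ref{I3}. The final expression $Y_{\alpha_I,\alpha_J}=Y_{\alpha_{\{i_1,i_2,i_3\}},\alpha_J}Y^{-1}_{\alpha_{\{i_4,i_5,i_6\}},\alpha_J}Y_{\alpha_{i_6},\alpha_J}$ matches the paper's exactly.
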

\begin{proof}
Note that $T_{\alpha_J}(\alpha_{\{i_1,i_2,i_3\}})=\alpha^{-1}_{\{i_4,i_5,i_6\}}$. By Lemma \ref{t^21} we have
\[T_{\alpha_J}^2=Y_{\alpha_{\{i_4,i_5,i_6\}},\alpha_J}Y^{-1}_{\alpha_{\{i_1,i_2,i_3\}},{\alpha_J}}.\]
On the other hand, since $T_{\alpha_J}(\alpha_I)=\alpha^{-1}_{i_6}$, we also have
\[T_{\alpha_J}^2=Y_{\alpha_{i_6},\alpha_J}Y^{-1}_{\alpha_I,{\alpha_J}}.\]
Hence
\[Y_{\alpha_I,{\alpha_J}}=Y_{\alpha_{\{i_1,i_2,i_3\}},\alpha_J}Y^{-1}_{\alpha_{\{i_4,i_5,i_6\}},{\alpha_J}}Y_{\alpha_{i_6},\alpha_J}\in G\]
by Lemmas \ref{I1} and \ref{I3}.
\end{proof}

\noindent{\it Proof of Theorem \ref{main}.}
Since by \cite[Lemma 3.6]{Szep2} $\Gamma_2(N_g)$ is generated by crosscap slides conjugate to $Y_{\alpha_1,\alpha_{\{1,2\}}}$ it suffices to prove that $G$ is normal in $\M(N_g)$. By Theorem \ref{chil} it is enough to check for every generator $x$ of $G$ that $A_ixA_i^{-1}\in G$ or $A_i^{-1}xA_i\in G$ for $i\in\{1,\dots,g\}$ and $BxB^{-1}\in G$ or $B^{-1}xB\in G$. Note that
$A_ixA_i^{-1}\in G\iff A_i^{-1}xA_i\in G$ since $A_i^2\in G$ by Lemma \ref{t23}, and analogously for $B$.

For $i\in\{1,\dots,g\}$ we have $A^{-1}_{i-1}(\alpha_i)=\alpha_{i-1}^{-1}$, $A_i(\alpha_i)=\alpha_{i+1}^{-1}$ and $A_k(\alpha_i)=\alpha_i$ for $k\neq i-1,i$. It follows that for all $i\ne j$ and $k$ the generator $Y_{\alpha_i,\alpha_{\{i,j\}}}$ is conjugated by $A_k$ or $A_k^{-1}$ to
$Y_{\alpha_l,\beta}$ for some $l$ and some $\beta$. 
Since the last element is in $G$ by Lemma \ref{I1}, we have proved that   $A_ixA_i^{-1}\in G$ for every $i\in\{1,\dots,g\}$ and every generator $x$ of type (1).

If $g\ge 4$ then $B(\alpha_i)=\alpha_i$ for $i>4$, while for $i\in\{1,2,3,4\}$ we have $B^{\pm 1}(\alpha_i)=\alpha_I^{-1}$, where $I=\{1,2,3,4\}\backslash\{i\}$.  
It follows by Lemma \ref{I1} or by Lemma \ref{I3} that $BxB^{-1}\in G$ for every generator $x$ of type (1).

Suppose that $I=\{i,j,k\}$ and $I=\{i,j,k,l\}$, where $i<j<k<l$. It can be checked that for $i>1$ we have 
$A_{i-1}(\alpha_I)=Y_{\alpha_i,\alpha_{\{i-1,i\}}}(\alpha_{\{i-1,j,k\}})$ and
$A_{i-1}(\alpha_J)=Y_{\alpha_i,\alpha_{\{i-1,i\}}}(\alpha_{\{i-1,j,k,l\}})$.
It follows that \[A_{i-1}Y_{\alpha_I,\alpha_J}A_{i-1}^{-1}
=Y_{\alpha_i,\alpha_{\{i-1,i\}}}Y_{\alpha_{\{i-1,j,k\}},\alpha_{\{i-1,j,k,l\}}}Y^{-1}_{\alpha_i,\alpha_{\{i-1,i\}}}\in G.\]
By similar arguments one can check that $A_nY_{\alpha_I,\alpha_J}A_n^{-1}\in G$ for $n\in\{1,\dots,g-1\}$.

If $i>4$ then 
$B(\alpha_I)=\alpha_I$, while for $k\le 4$ we have $B^{\pm 1}(\alpha_I)=\alpha_m^{-1}$, where $\{m\}=\{1,2,3,4\}\backslash I$. In both cases we have $BY_{\alpha_I,\alpha_J}B^{-1}\in G$.
If $i=1$, $j=2$ and $k>4$, then for
$Y_1=Y_{\alpha_3,\alpha_{\{2,3\}}}$, $Y_2=Y_{\alpha_4,\alpha_{\{2,4\}}}$
it can be checked  that 
$Y_1Y_2(\alpha_I)$ and $Y_1Y_2(\alpha_J)$ are disjoint from
$\alpha_{\{1,2,3,4\}}$. 
It follows that
\[BY_1Y_2Y_{\alpha_I,\alpha_J}Y^{-1}_2Y^{-1}_1B^{-1}
=Y_1Y_2Y_{\alpha_I,\alpha_J}Y^{-1}_2Y^{-1}_1\in G.\]
From earlier part of the proof we know that $BY_1Y_2B^{-1}\in G$, hence $BY_{\alpha_I,\alpha_J}B^{-1}\in G$. A similar argument, using different $Y_1$, $Y_2$, can be applied to other $I$ such that $|I\cap\{1,2,3,4\}|=2$. 
It remains to consider the cases where $i\le 4$ and $j>4$.
If $i=1$ then it cen be checked that
$B^{-1}(\alpha_I)=Y^{-1}_{\alpha_1,\alpha_{\{1,2\}}}(\alpha_{I'})$, where $I'=\{2,3,4,j,k\}$, and 
$B^{-1}(\alpha_J)=Y^{-1}_{\alpha_1,\alpha_{\{1,2\}}}(\alpha_{J'})$, where  $J'=I'\cup\{l\}$. Since $Y_{\alpha_{I'},\alpha_{J'}}\in G$ by Lemma \ref{I5}, we have
\[B^{-1}Y_{\alpha_I,\alpha_J}B=Y^{-1}_{\alpha_1,\alpha_{\{1,2\}}}Y_{\alpha_{I'},\alpha_{J'}}Y_{\alpha_1,\alpha_{\{1,2\}}}\in G.\]
If $i=2$ then for $Y_1=Y_{\alpha_1,\alpha_{\{1,2\}}}$ and $Y_2=Y_{\alpha_2,\alpha_{\{1,2\}}}$ we have
$B^{-1}Y^{-1}_1(\alpha_I)=Y_1^{-1}Y_2(\alpha_{I'})$, where $I'=\{1,3,4,j,k\}$,
and $B^{-1}Y^{-1}_1(\alpha_J)=Y_1^{-1}Y_2(\alpha_{J'})$,
where $J'=I'\cup\{l\}$.
Since $Y_{\alpha_{I'},\alpha_{J'}}\in G$ by Lemma \ref{I5} we have
\[B^{-1}Y_1^{-1}Y_{\alpha_I,\alpha_J}Y_1B=Y_1^{-1}Y_2Y_{\alpha_{I'},\alpha_{J'}}Y_2^{-1}Y_1\in G,\]
and since $B^{-1}Y_1B\in G$ by earlier part of the proof, also $B^{-1}Y_{\alpha_I,\alpha_J}B\in G$.
The proof is similar for $i=3$ and $i=4$. 
\begin{rem}
By the proof of Lemma \ref{t23} for $i<j<k<l$ we have
\[T^2_{\alpha_{\{i,j,k,l\}}}=Y_{\alpha_l,\alpha_{\{i,j,k,l\}}}Y^{-1}_{\alpha_{\{i,j,k\}},\alpha_{\{i,j,k,l\}}},\]
and by Lemma \ref{I1} $Y_{\alpha_l,\alpha_{\{i,j,k,l\}}}$ can be written as a product of the generators of type (1). It follows that each generator of type (2) $Y_{\alpha_{\{i,j,k\}},\alpha_{\{i,j,k,l\}}}$ can be replaced by
$T^2_{\alpha_{\{i,j,k,l\}}}$ in Theorem \ref{main}.
\end{rem}
\section{Low genus cases}\label{low}
For $i\in\{1,\dots,g\}$ let $c_i$ denote the homology class of the curve $\alpha_i$ in $H_1(N_g,\Z)$. Then $H_1(N_g,\Z)$ has the following presentation as a $\Z$-module:
\[H_1(N_g,\Z)=\lr{c_1,\dots,c_g\,|\,2(c_1+\cdots+c_g)=0}.\]
Consider the quotient $R_g=H_1(N_g,\Z)/\lr{c}$, where $c=c_1+\cdots+c_g$ is the unique homology class of order 2. It is immediate from the above presentation  that $R_g$ is the free $\Z$-module with basis given by the images of $c_1,\dots,c_{g-1}$ in $R_g$. Let us fix this basis and identify 
$Aut(R_g)$ with $GL(g-1,\Z)$.
Every automorphism of $H_1(N_g,\Z)$ preserves $c$, and thus induces an automorphism of $R_g$. Thus the action of $\M(N_g)$ on
$H_1(N_g,\Z)$ induces a homomorphism
\[\rho\colon\M(N_g)\to GL(g-1,\Z).\]
In general $\rho$ is neither surjective nor injective. 
However, it was shown in \cite[Section 2]{McCP}, that the group of automorphisms of $H_1(N_g,\Z)$ which act trivially on $H_1(N_g,\Z_2)$ is isomorphic to the full group of automorphisms of $R_g$ which act trivially on $R_g\otimes\Z_2$. Consequently, the restriction of $\rho$ to $\Gamma_2(N_g)$ yields a surjection
$$\eta\colon\Gamma_2(N_g)\longrightarrow GL_2(g-1,\Z),$$
where $GL_2(n,\Z)$ is the level 2 congruence subgroup of $GL(n,\Z)$.

Birman and Chillingworth obtained in \cite[Theorem 3]{BirChill} a finite presentation for $\M(N_3)$ from which it is immediate that this group is isomorphic to $GL(2,\Z)$. 
It turns out that such isomorphism can also be deduced from the action on $H_1(N_3,\Z)$.
\begin{prop}\label{gl2z}
The map $\rho\colon\M(N_3)\to GL(2,\Z)$ is an isomorphism.
\end{prop}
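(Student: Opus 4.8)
The plan is to show that $\rho\colon\M(N_3)\to GL(2,\Z)$ is both surjective and injective, exhibiting a concrete inverse on generators.

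\smallskip

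\textbf{Surjectivity.} First I would compute the images under $\rho$ of the three generators $Y$, $A_1$, $A_2$ given by Theorem~\ref{chil}, using the action on $H_1(N_3,\Z)$ and passing to $R_3 = H_1(N_3,\Z)/\lr{c}$, which is free on the images $\bar c_1, \bar c_2$ of $c_1, c_2$. The twist $A_i = T_{\alpha_{\{i,i+1\}}}$ acts on homology by the transvection $x\mapsto x + (x\cdot [\alpha_{\{i,i+1\}}])[\alpha_{\{i,i+1\}}]$ (with the appropriate sign), and one checks that in the basis $\bar c_1,\bar c_2$ these become the elementary matrices $\left(\begin{smallmatrix}1 & \pm 1\\ 0 & 1\end{smallmatrix}\right)$ and $\left(\begin{smallmatrix}1 & 0\\ \pm 1 & 1\end{smallmatrix}\right)$; the crosscap slide $Y$ contributes an element of determinant $-1$ (it reverses a local orientation), for instance something conjugate to $\mathrm{diag}(1,-1)$ or $\left(\begin{smallmatrix}0 & 1\\ 1 & 0\end{smallmatrix}\right)$. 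Since $GL(2,\Z)$ is generated by the two elementary matrices together with one element of determinant $-1$, this shows $\rho$ is onto.

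\smallskip

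\textbf{Injectivity.} For this I would use the known order of $\M(N_3)$: it is finitely presented by Birman--Chillingworth \cite{BirChill}, and in fact that presentation already identifies $\M(N_3)$ with $GL(2,\Z)$, so one could simply invoke it. To keep the argument self-contained via the homology action, the cleaner route is to produce an explicit homomorphism $\sigma\colon GL(2,\Z)\to\M(N_3)$ sending the generators of $GL(2,\Z)$ to $A_1^{\pm 1}$, $A_2^{\pm 1}$, $Y$ (or suitable words in them) and to verify that $\rho\circ\sigma = \mathrm{id}$ and $\sigma\circ\rho = \mathrm{id}$; the second identity follows once we know $\sigma$ is surjective (which it is, since its image contains the generators from Theorem~\ref{chil}) and $\rho\circ\sigma=\mathrm{id}$ forces $\sigma$ to be injective, hence an isomorphism, hence $\rho$ is its inverse. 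Checking $\sigma$ is well defined amounts to verifying that the defining relations of $GL(2,\Z)$ (e.g.\ the Coxeter-type presentation $\lr{s,t,u\mid s^2=t^2=u^2=(st)^3=(su)^2=\dots}$, or the presentation in terms of $S$ and $ST$) hold among the chosen elements of $\M(N_3)$, which is where the input from \cite{BirChill} genuinely enters.

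\smallskip

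\textbf{Main obstacle.} The real content is the injectivity, i.e.\ ruling out a nontrivial kernel of $\rho$ on $\M(N_3)$; unlike the orientable case there is no general faithfulness of the homology action, so this must be extracted either from the Birman--Chillingworth presentation or from a low-genus ad hoc argument. Everything else — computing the three $2\times 2$ matrices and recalling that $GL(2,\Z)$ is generated by elementary matrices and a reflection — is routine linear algebra that I would not spell out in detail.
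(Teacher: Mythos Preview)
Your approach is correct and essentially the same as the paper's: compute $\rho$ on the generators $A_1,A_2,Y$, then construct an explicit inverse homomorphism $GL(2,\Z)\to\M(N_3)$ by sending generators of a standard presentation of $GL(2,\Z)$ to words in $A_1,A_2,Y$, verifying well-definedness via the Birman--Chillingworth relations in $\M(N_3)$. The paper carries this out concretely with the presentation $\lr{S,T,U\mid STS=TST,\ (STS)^4=1,\ U^2=(US)^2=(UT)^2=1}$, computing $\rho(A_2)=S$, $\rho(A_1)=STS^{-1}$, $\rho(Y)=SUS^{-1}$, and checking the corresponding relations hold in $\M(N_3)$; your sketch leaves these specifics open but identifies exactly the same ingredients and the same logical structure.
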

\begin{proof}
It is a classical result that $SL(2,\Z)$ admits the presentation
\[SL(2,\Z)=\lr{S,T\,|\,STS=TST, (STS)^4=I},\]
where $S=\begin{pmatrix}1&1\\0&1\end{pmatrix}$,  
$T=\begin{pmatrix}1&0\\-1&1\end{pmatrix}$ and $I$ is the identity matrix. In order to obtain a presentation for $GL(2,\Z)$ one has to add a generator with determinant $-1$, for example
$U=\begin{pmatrix}-1&0\\0&1\end{pmatrix}$ and relations
\[U^2=(US)^2=(UT)^2=I.\]
Let $A_1$, $A_2$ and $Y$ be the generators of $\M(N_3)$ from Theorem \ref{chil}.
It can be checked that 
\[\rho(A_1)=STS^{-1},\  
\rho(A_2)=S,\ 
\rho(Y)=SUS^{-1}.  
\]
In $\M(N_3)$ we have the relations (see \cite{BirChill}):
\begin{align*}
&A_1A_2A_1=A_2A_1A_2,\quad (A_1A_2A_1)^4=1,\\
&Y^2=(YA_1)^2=(YA_2)^2=1.
\end{align*}
It follows that the mapping
\[S\mapsto A_2,\quad T\mapsto A_2^{-1}A_1A_2,\quad U\mapsto A_2^{-1}YA_2\]
extends to a homomorphism $GL(2,\Z)\to\M(N_3)$ which is the inverse of $\rho$.
\end{proof}
\begin{cor}
The map $\eta\colon\Gamma_2(N_3)\to GL_2(2,\Z)$ is an isomorphism.
\end{cor}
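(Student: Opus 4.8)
The plan is to combine Proposition~\ref{gl2z} with Theorem~\ref{main} in the genus~$3$ case. From the general discussion we already know that the restriction of $\rho$ to $\Gamma_2(N_g)$ lands in $GL_2(g-1,\Z)$ and gives a surjection $\eta\colon\Gamma_2(N_g)\to GL_2(g-1,\Z)$; for $g=3$ this is a surjection $\eta\colon\Gamma_2(N_3)\to GL_2(2,\Z)$. So the only thing to prove is injectivity. But Proposition~\ref{gl2z} says $\rho\colon\M(N_3)\to GL(2,\Z)$ is an isomorphism, and $\Gamma_2(N_3)$ is by definition a subgroup of $\M(N_3)$ on which $\rho$ restricts to $\eta$ (after identifying the target $GL_2(2,\Z)\subseteq GL(2,\Z)$). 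Since the ambient map is injective, its restriction is injective. Hence $\eta$ is a bijective homomorphism, i.e.\ an isomorphism.

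Concretely I would argue as follows. First recall that $\Gamma_2(N_3)=\rho^{-1}\big(GL_2(2,\Z)\big)$ need not hold a priori, but $\Gamma_2(N_3)\subseteq\rho^{-1}\big(GL_2(2,\Z)\big)$ certainly does, since an element acting trivially on $H_1(N_3,\Z_2)$ acts trivially on $R_3\otimes\Z_2$; this is exactly what makes $\eta$ well defined with image in $GL_2(2,\Z)$. Surjectivity of $\eta$ onto $GL_2(2,\Z)$ was already established in the text preceding Proposition~\ref{gl2z} (it is the specialization to $g=3$ of the surjection $\eta\colon\Gamma_2(N_g)\to GL_2(g-1,\Z)$ coming from \cite[Section~2]{McCP}). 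Injectivity: if $f\in\Gamma_2(N_3)$ and $\eta(f)=I$, then $\rho(f)=I$ in $GL(2,\Z)$, so $f=1$ in $\M(N_3)$ by Proposition~\ref{gl2z}; in particular $f=1$ in $\Gamma_2(N_3)$. Thus $\eta$ is injective, and being also surjective it is an isomorphism.

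There is essentially no obstacle here: the corollary is a formal consequence of Proposition~\ref{gl2z} together with the already-recorded surjectivity of $\eta$. The only point requiring a word of care is the identification of the target: one must note that under the isomorphism $\rho\colon\M(N_3)\xrightarrow{\ \sim\ }GL(2,\Z)$ the subgroup $\Gamma_2(N_3)$ maps precisely onto $GL_2(2,\Z)$. The inclusion $\rho(\Gamma_2(N_3))\subseteq GL_2(2,\Z)$ is the definition of $\eta$; the reverse inclusion is the surjectivity of $\eta$. Since $\rho$ is injective on all of $\M(N_3)$, this set-theoretic equality of images upgrades to the statement that $\eta=\rho|_{\Gamma_2(N_3)}$ is an isomorphism onto $GL_2(2,\Z)$, which is what the corollary asserts.
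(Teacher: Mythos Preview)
Your proof is correct and matches the paper's intended argument: the corollary is stated without proof precisely because it follows immediately from Proposition~\ref{gl2z} (injectivity of $\rho$, hence of its restriction $\eta$) together with the already-established surjectivity of $\eta$. Your mention of Theorem~\ref{main} in the opening sentence is a red herring---you never actually use it, and it is not needed here---but the argument itself is sound.
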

Let $Mat_n(\Z_2)$ denote the additive group of $n\times n$ matrices with entries in $\Z_2$. This is an abelian group isomorphic to $\Z_2^{n^2}$. Let us define an epimorphism $f\colon GL_2(n,\Z)\to Mat_n(\Z_2)$. Let $X$ be any matrix in $GL_2(n,\Z)$. Write $X=I+2A$, where $I$ is the identity matrix and define
$f(X)=A\bmod 2$. To see that this is a homomorphism take $Y=I+2B$. Then
\[f(XY)=f(I+2(A+B)+4AB)=A+B \bmod 2.\]
Let $E_{i,j}$ be the elementary $n\times n$ matrix with $1$ at position $(i,j)$ and $0$'s elsewhere. Since for each pair $(i,j)$ the matrix $I-2E_{i,j}$ is in $GL_2(n,\Z)$, thus $f$ is onto. The map $f$ was defined in \cite{LSz} to determine  abelianizations of congruence subgroups of $SL(n,\Z)$. Now let $g>1$ and consider the composition
\[f\circ\eta\colon \Gamma_2(N_g)\to Mat_{g-1}(\Z_2).\]
Since $f\circ\eta$ is surjective, we see that $\Gamma_2(N_g)$ cannot be generated by less then $(g-1)^2$ elements. In particular Theorem \ref{main} provides the minimal number of generators for $\Gamma_2(N_3)$. It follows from the next proposition that this is also the case for $g=4$.
\begin{prop}
The group $H_1(\Gamma_2(N_4),\Z)$ is isomorphic to $\Z_2^{10}$.
\end{prop}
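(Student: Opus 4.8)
The plan is to identify $H_1(\Gamma_2(N_4),\Z)$ by producing a surjection onto $\Z_2^{10}$ and then checking the ten generators of Theorem~\ref{main} all have order two in the abelianization, so that no further relations can enlarge it. First I would invoke the epimorphism $f\circ\eta\colon\Gamma_2(N_4)\to Mat_3(\Z_2)\cong\Z_2^9$ constructed above; since this factors through $H_1(\Gamma_2(N_4),\Z)$, the abelianization surjects onto $\Z_2^9$. But $\Gamma_2(N_4)$ has $10$ generators of type (1) (there are $(g-1)^2=9$) together with $\binom{4}{4}=1$ of type (2), so I need one more $\Z_2$ factor. I would construct an additional homomorphism $\Gamma_2(N_4)\to\Z_2$ detecting, roughly, the ``crosscap slide parity'' — the mod $2$ count of how a mapping class permutes or reflects a suitable structure not seen by the $R_g$-action — and show the combined map $\Gamma_2(N_4)\to\Z_2^{10}$ is onto. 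Concretely one can try pairing a crosscap slide $Y_{\alpha,\beta}$ with the element it determines in $H_1(N_4,\Z_2)$ via the curve $\beta$, or use the quadratic-form data of Gadgil--Pancholi; the Birman--Chillingworth--style presentation of $\M(N_4)$ will tell us which combination of generators this extra invariant distinguishes.

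Next I would establish the reverse inequality $|H_1(\Gamma_2(N_4),\Z)|\le 2^{10}$, equivalently that $H_1$ is generated by the images of the ten generators and that each has order dividing $2$. Order dividing $2$: for a generator of type (1), $Y_{\alpha_i,\alpha_{\{i,j\}}}$, Lemma~\ref{t^21} gives $T_{\alpha_{\{i,j\}}}^2=Y_{\alpha_j,\alpha_{\{i,j\}}}Y_{\alpha_i,\alpha_{\{i,j\}}}^{-1}$, and $T_{\alpha_{\{i,j\}}}^2$ is conjugate in $\M(N_4)$ to its inverse (one can conjugate by a crosscap slide along $\alpha_{\{i,j\}}$, which reverses a neighbourhood orientation as in the proof of Lemma~\ref{t^21}); since that conjugating element lies in $\Gamma_2$, in the abelianization $T_{\alpha_{\{i,j\}}}^2$ equals its own inverse, i.e.\ has order $\le 2$, and a short argument of the same flavour shows $Y_{\alpha_j,\alpha_{\{i,j\}}}$ and $Y_{\alpha_i,\alpha_{\{i,j\}}}$ become equal to their inverses too. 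For the type (2) generator one uses the Remark: $Y_{\alpha_{\{1,2,3\}},\alpha_{\{1,2,3,4\}}}$ is, modulo a type (1) element, equal to $T_{\alpha_{\{1,2,3,4\}}}^2$, and the same conjugation-by-a-slide trick makes $T_{\alpha_{\{1,2,3,4\}}}^2$ two-torsion in the abelianization. Then $H_1(\Gamma_2(N_4),\Z)$ is a quotient of $\Z_2^{10}$.

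Combining the two halves gives $H_1(\Gamma_2(N_4),\Z)\cong\Z_2^{10}$. I would organise the writeup as: (i) recall $f\circ\eta$ and note it factors through $H_1$; (ii) define the tenth invariant $\Gamma_2(N_4)\to\Z_2$ and verify surjectivity of the product map by evaluating on the generators; (iii) using Lemmas~\ref{t^21}, \ref{t^22}, \ref{t23} and the Remark after Theorem~\ref{main}, show every generator is $2$-torsion in $H_1$, so $H_1$ is a quotient of $\Z_2^{10}$; (iv) conclude equality. The main obstacle I anticipate is step (ii): pinning down the extra $\Z_2$-valued homomorphism that is independent of the nine coming from $Mat_3(\Z_2)$ and proving it is genuinely nontrivial and independent — this likely requires either the explicit finite presentation of $\M(N_4)$ (along the lines of Birman--Chillingworth / Szepietowski) to compute the abelianization of $\Gamma_2(N_4)$ directly as a normal subgroup, or a clean topological invariant (a $\Z_2$-valued ``Arf-type'' or crosscap-permutation invariant) that one checks vanishes on the nine $Mat_3(\Z_2)$-generators but not on the tenth.
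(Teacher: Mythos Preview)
Your outline matches the paper's proof: bound $d\le 10$ by showing $H_1$ is $2$-torsion on ten generators, bound $d\ge 9$ via $f\circ\eta$, and then produce one more independent $\Z_2$. Two points where your sketch is vaguer than it needs to be, and where the paper is more direct:

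\medskip
\textbf{Two-torsion.} Rather than arguing case-by-case that each generator is conjugate in $\Gamma_2$ to its inverse (your ``short argument of the same flavour'' for the $Y$'s is not obviously complete --- you would still need to exhibit an element of $\Gamma_2$ fixing $\alpha_i$ and reversing $\alpha_{\{i,j\}}$), the paper simply invokes the prior result that $\Gamma_2(N_g)$ is generated by involutions. This immediately forces $H_1(\Gamma_2(N_4),\Z)\cong\Z_2^d$ for some $d$, and Theorem~\ref{main} gives $d\le 10$.

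\medskip
\textbf{The tenth $\Z_2$.} You correctly identify this as the crux and correctly guess that the finite presentation of $\M(N_4)$ is the tool. The paper does not build a tenth homomorphism $\Gamma_2(N_4)\to\Z_2$ directly; instead it exhibits a specific element of $\ker(f\circ\eta)$ that is nontrivial in $H_1(\Gamma_2)$. Namely $B^2=T_{\alpha_{\{1,2,3,4\}}}^2$ acts trivially on $H_1(N_4,\Z)$, hence lies in $\ker\eta$. To see it is nonzero in $H_1(\Gamma_2(N_4))$, the presentation of $\M(N_4)$ from \cite{Szep1} yields an epimorphism $\theta\colon\M(N_4)\to\Z_2$ with $\theta(B)=1$ and $\theta=0$ on all other generators; in particular $\theta(Y)=0$, so $\Gamma_2(N_4)\subset\ker\theta$ (it is the normal closure of $Y$). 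One then writes down a presentation of the index-$2$ subgroup $\ker\theta$ and checks that $B^2$ survives in its abelianization; since $[\Gamma_2,\Gamma_2]\subset[\ker\theta,\ker\theta]$, this gives $B^2\notin[\Gamma_2,\Gamma_2]$. That forces $d\ge 10$, hence $d=10$.

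So your proposed ``Arf-type'' or ``crosscap-parity'' invariant is, concretely, the image of $B^2$ detected through $H_1(\ker\theta)$; the presentation of $\M(N_4)$ is not optional here but is exactly what makes the argument go through.
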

\begin{proof}
By \cite[Theorem 3.7]{Szep2} $\Gamma_2(N_g)$ is generated by involutions. It follows that $H_1(\Gamma_2(N_4),\Z)\approx\Z_2^d$ for some integer $d$. From Theorem \ref{main} we have $d\le 10$ and the existence of $f\circ\eta$ gives $d\ge 9$. Let
\[h\colon H_1(\Gamma_2(N_4),\Z)\to Mat_3(\Z_2)\] be the map induced by $f\circ\eta$. To prove $d=10$ it suffices to show that $\ker h$ is not trivial.
Let $B=T_{\alpha_{\{1,2,3,4\}}}$ and observe that $B^2$ induces the identity on $H_1(N_4,\Z)$, hence it belongs to $\ker\eta$. We claim that 
$B^2$ is not in the commutator subgroup $[\Gamma_2(N_4),\Gamma_2(N_4)]$, hence it represents a nontrivial element of $\ker h$. To prove this claim we need to refer to the presentation of $\M(N_4)$ given in \cite{Szep1}. It follows from this presentation that there exists an epimorphism
$\theta\colon\M(N_4)\to\Z_2$ such that $\theta(B)=1$ and $\theta(x)=0$ for every generator $x$ different from $B$. In particular $\theta(Y)=0$, where $Y=Y_{\alpha_1,\alpha_{\{1,2\}}}$. Since $\Gamma_2(N_4)$ is the normal closure of $Y$ we have $\Gamma_2(N_4)\subset\ker\theta$. It is a routine to obtain from the presentation of $\M(N_4)$ a presentation for the index 2 subgroup $\ker\theta$ and check that $B^2$ survives in its abalinization, that is $B^2\notin[\ker\theta,\ker\theta]$. Since $\Gamma_2(N_4)\subset\ker\theta$ also
$B^2\notin [\Gamma_2(N_4),\Gamma_2(N_4)]$.
\end{proof}
%

%
%

\end{document}